\renewcommand{\Re}{\operatorname{Re}} %real part 
\newcommand{\R}{\mathbb{R}} %real numbers
\newcommand{\C}{\mathbb{C}} %complex numbers
\newcommand{\ord}{\mathcal O} %Landau symbol
\newcommand{\cc}[1]{\overline{#1}} %complex conjugate
\newcommand{\del}[1]{\partial_{#1}} %partial derivative
\newcommand{\til}[1]{\tilde #1} %tilde
\newcommand{\ev}{\xi} %eigenvector
\renewcommand{\prod}[2]{\left\langle #1,#2\right\rangle} %scalar product
\newcommand{\sgn}{\operatorname{sgn}} %signum
\newcommand{\zhong}{\begin{CJK}{UTF8}{bsmi}中 \end{CJK}}
\newtheorem{mylemma}{Lemma}[section] % define own Lemma for appendix
\DeclareMathOperator{\interior}{int}  % int   (Inneres, Interior)
\journalname{Journal of Mathematical Biology}
\begin{document}

\title{Global Hopf Bifurcation in the ZIP regulatory system \thanks{This work was supported by: German Research Foundation grant number CH 958/1-1, Excellence Initiative II Heidelberg University ``Mobilit\"atsma\ss nahmen im Rahmen internationaler Forschungskooperationen 2013-2014'' project number D.80100/13.009, and EPSRC grant number EP/K036521/1.}
}
%\subtitle{Do you have a subtitle?\\ If so, write it here}

%\titlerunning{Short form of title}        % if too long for running head

\author{Juliane Claus  \and
        Mariya Ptashnyk \and
        Ansgar Bohmann \and
	Andr\'es Chavarr\'ia-Krauser  %etc.
}

\authorrunning{Claus, Ptashnyk, Bohmann, Chavarr\'ia-Krauser} % if too long for running head

\offprints{A. Chavarr\'ia-Krauser}

\institute{J. Claus \at
              Interdisciplinary Center for Scientific Computing, Heidelberg University\\
%               Tel.: +123-45-678910\\
%               Fax: +123-45-678910\\
              \email{juliane.claus@bioquant.uni-heidelberg.de}           %  \\
%             \emph{Present address:} of F. Author  %  if needed
           \and
	M. Ptashnyk \at
	  Division of Mathematics, University of Dundee\\
	     \email{mptashnyk@maths.dundee.ac.uk}    
	\and
	A. Bohmann \at
              Interdisciplinary Center for Scientific Computing, Heidelberg University
        \and
          A. Chavarr\'ia-Krauser \at
              Interdisciplinary Center for Scientific Computing, Heidelberg University\\
	\email{andres.chavarria@bioquant.uni-heidelberg.de}
}

\date{Received: date / Accepted: date}
% The correct dates will be entered by the editor

\maketitle

\begin{abstract}
Regulation of zinc uptake in roots of \emph{Arabidopsis thaliana} has recently been modeled by a system of ordinary differential equations based on the uptake of zinc, expression of a transporter protein and the interaction between an activator and inhibitor. For certain parameter choices the steady state of this model becomes unstable upon variation in the external zinc concentration. Numerical results show periodic orbits emerging between two critical values of the external zinc concentration. Here we show the existence of a global Hopf bifurcation with a continuous family of stable periodic orbits between two Hopf bifurcation points. The stability of the orbits in a neighborhood of the bifurcation points is analyzed by deriving the normal form, while the stability of the orbits in the global continuation is shown by calculation of the Floquet multipliers. From a biological point of view, stable periodic orbits lead to potentially toxic zinc peaks in plant cells. Buffering is believed to be an 
efficient way to deal with strong transient variations in zinc supply. We extend the model by a buffer reaction and analyze the stability of the steady state in dependence of the properties of this reaction. We find that a large enough equilibrium constant of the buffering reaction stabilizes the steady state and prevents the development of oscillations. Hence, our results suggest that buffering has a key role in the dynamics of zinc homeostasis in plant cells. 
\keywords{transport processes in plants \and zinc uptake \and Hopf bifurcation \and periodic orbits \and stability}
% \PACS{PACS code1 \and PACS code2 \and more}
% \subclass{MSC code1 \and MSC code2 \and more}
\end{abstract}

\section{Introduction}
\label{intro}

Zinc is an essential micronutrient for plants, because it plays an important role in many enzymes catalyzing vital cellular reactions. In higher doses, however, zinc is toxic. Therefore, plants have to strictly control and adjust the uptake of zinc through their roots depending on its concentration in the surrounding soil. This is achieved by a complicated control system consisting of sensors, transmitters and zinc transporter proteins.

The main influx transporters of zinc are proteins of the ZIP (zinc-, iron-responsive proteins) family \citep{Grotz_1998,Guerinot_2000}. They were shown to be highly expressed under conditions of zinc deficiency and down-regulated under zinc excess \citep{Talke_2006}. The mechanism of this regulation has been studied by \citet{Assuncao_2010}. They showed that \emph{ZIP4} in \emph{Arabidopsis thaliana} is regulated by the transcription factors bZIP19 and bZIP23 of the basic-region leucine zipper (bZIP) family. These transcription factors apparently induce ZIP expression by binding to a zinc deficiency response element (ZDRE), which is present in the promoter region of most ZIP proteins. Thereby, bZIPs increase ZIP transporter expression under zinc deficiency. When the internal zinc concentration in the cell rises, bZIPs are inhibited and the expression of zinc transporters decreases strongly. The mechanistic details of this inhibition are yet unclear, as bZIP transcription factors do not seem to 
have a zinc 
binding site for sensing the internal zinc concentration \citep{Assuncao_2010}. Most likely there are additional sensors that bind zinc and inhibit bZIP19 and bZIP23 \citep{Assuncao_2010_mini}. 

These biological observations gave rise to a mathematical model, which we developed and published previously in \citet{Claus_2012}. The interested reader finds more details on the model, notation, parameters and biological implications there. The model is a four-dimensional autonomous system of ordinary differential equations
\begin{equation}
  \label{eq:form}
\begin{aligned}
	\frac{du}{dt} = F(u,\mu) \; , & \quad t \in [0,\infty)\; ,\\
	u(0) = u^0 \; ,
\end{aligned}
\end{equation}
with a parameter $\mu \in \mathcal{M}:=[0,m]$, where $m>0$ sufficiently large, a $C^1$ function $F: \mathbb{R}^4 \times \mathcal{M} \to \mathbb{R}^4$, and non-negative initial conditions $u^0 \in \mathbb{R}^4_+$. Referring to the original notation of \citet{Claus_2012}, the components of $u$ are the gene expression level, internal zinc concentration, the activation and inhibition levels, and the parameter $\mu$ represents the external zinc concentration. In this paper we study the qualitative behavior of the solutions with respect to the parameter $\mu$.  

Dynamical systems depending on a parameter $\mu$ can undergo bifurcations that change the qualitative behavior of solutions substantially when crossing a critical value   $\mu^*$. In the Hopf (or Poincar\'e-Andronov-Hopf) bifurcation, first described by \citet{Hopf_1942}, a steady state changes stability as a pair of complex conjugate eigenvalues of the linearization cross the imaginary axis and a family of periodic orbits bifurcates from the steady state.

While the original results by \citet{Hopf_1942} give conditions for the local existence of such orbits, later works are focused on the global continuation of periodic solutions. The first result on global continuability of periodic orbits  for ordinary differential equations was presented in \citet{Alexander_1978} using methods of algebraic topology. Another proof of the global result was given in \citet{Ize_1976} using homotopy theory. The Fuller index, an index for periodic solutions of a system of  autonomous  equations, was used in \citet{Chow1978} to generalize 
the global Hopf bifurcation theorem, proved  by  \citet{Alexander_1978},  to functional differential equations. 
An \emph{orbit index} and a \emph{center index} were introduced in \citet{Mallet_1982} and applied also in \citet{Alligood1983} to analyze the large connected sets of periodic solutions of a one-parameter differential equation. Hopf bifurcation points with a center index of $1$ are called \emph{sources}, and those with center index of $-1$ are called \emph{sinks}. \citet{Mallet_1982} showed, that if a set of orbits is bounded with respect to the parameter, solution, and periods of the orbits, then the set  must have as many source as sink Hopf bifurcations. Each source is connected to a sink by an oriented one-parameter path of orbits that contains no orbits with zero orbit index.
The results on global continuability  for locally continuable non-M\"obius orbits  of general $C^1$ dynamical systems were obtained in \citet{Alligood1984}. The term ``global'' Hopf bifurcation is sometimes used to denote only unbounded sets of periodic solutions \citep{Fiedler_1986}. We, however, will use it in the sense of \citet{Alexander_1978} to mean any non-local continuation of a family of periodic orbits.

The paper is organized as follows. In Sect.~\ref{sec:model} we consider  the  well-posedness and uniform boundedness of the solutions, and prove the existence of a unique stationary solution of the model for the regulation of zinc uptake. Then,
we shall show in Sect.~\ref{sec:local} that for two critical parameter values $\mu_{1}$ and $\mu_2$  Hopf bifurcations occur in the model. By deriving the normal form in Sect.~\ref{sec:normal}, we analyze the stability of the periodic orbits in a neighborhood of the bifurcation points. Furthermore, in Sect.~\ref{sec:global} we show the existence of a global continuous path of periodic orbits between the two Hopf bifurcation points and in Sect.~\ref{sec:floquet} analyze the stability of periodic solutions  by calculating the Floquet multipliers via the monodromy matrix. Finally, in Sect.~\ref{buffering},  we extend the model by a buffer reaction and analyze the properties needed to ensure the  stability of the stationary solutions.

\section{Mathematical model}
\label{sec:model}

In \citet{Claus_2012} we studied mathematical models  for  the regulation of ZIP transporters in response to external zinc concentration. The most promising model identified in this study included a dimerizing activator  $A$, as well as an inhibitor $I$ that senses the internal zinc concentration  $Z$ and inhibits the activator. Gene activity (denoted by $G$) is induced by the activator and leads to transcription of mRNA ($M$) and production of ZIP transporter proteins ($T$).

We consider a simplification of the original system, where transcription and translation of transporter proteins are assumed to be quasi-stationary and in equilibrium  $G=M=T$. Thereby, the system reduces to four equations describing the time evolution of $G$, $Z$, $A$, and $I$, now denoted by $u_1$, $u_2$, $u_3$, and $u_4$, respectively. The non-dimensionalized model reads
\begin{equation}
 \label{eq:model}
\begin{aligned}
	\frac{du_1}{dt} &= \kappa u_3^2(1-u_1) - u_1\; ,\\
	\frac{du_2}{dt} &= u_1f(\mu) - u_2\; ,\\
	\frac{du_3}{dt} &= 1- \gamma_1 u_3 u_4 -  u_3\; ,\\
	\frac{du_4}{dt} &= \gamma_2 u_2 - \gamma_3 u_3 u_4 - (1+\gamma_2 u_2) u_4\; ,
\end{aligned}
\end{equation}
where $f(\mu):=\frac{\mu}{\mu+ K}$ defines the influx of zinc dependent on the external zinc concentration $\mu$. 
The parameter  $\kappa$  describes the induction of gene activity ($u_1$) by the dimerized activator ($u_3$), $\gamma_1$  and $\gamma_3$ the binding between activator and inhibitor ($u_3$ and $u_4$), $\gamma_2$ the binding between zinc ($u_2$) and the inhibitor ($u_4$), and $K$ is the Michaelis constant of the reaction between external zinc ($\mu$) and zinc transporters ($u_1$). All  parameter values, here $K =13$, $\kappa =20$, $\gamma_1=380$, $\gamma_2= 1000$, $\gamma_3= 1672$,  have been estimated from literature and by fitting the model to experimental data \citep{Claus_2012}.
As a property of the soil, the external zinc concentration is considered as the bifurcation parameter $\mu\in \mathcal M$.

Considering $F(u,\mu)= (F_1(u), F_2(u,\mu), F_3(u), F_4(u))^T$, where
\begin{align*}
F_1(u)&= \kappa u_3^2(1-u_1) - u_1\; , &\quad F_2(u,\mu)&=u_1f(\mu) - u_2\; ,\\
F_3(u)&= 1- \gamma_1 u_3 u_4 - u_3\; , &\quad F_4(u)&=\gamma_2 u_2 - \gamma_3 u_3 u_4 - (1+\gamma_2 u_2) u_4\; ,
\end{align*}
we can rewrite the system \eqref{eq:model} in the form \eqref{eq:form}. $F(u,\mu)$ is  continuously differentiable in $u$  and its Jacobian is given by
\begin{equation}
  \label{eq:jac}
	J(u, \mu)=
	\begin{pmatrix}
	-\kappa u_3^2 - 1 & 0 & 2\kappa u_3(1-u_1) & 0 \\
	f(\mu) & -1 & 0 & 0 \\
	0 & 0 & -\gamma_1 u_4 - 1 & -\gamma_1 u_3\\
	0 & \gamma_2(1-u_4) & -\gamma_3 u_4 & -\gamma_3 u_3-1-\gamma_2 u_2
	\end{pmatrix}\;.
\end{equation}

Two theorems on the existence and uniqueness of the global solution and a positive steady state of system \eqref{eq:model} are given below. The interested reader finds their proofs in Appendices \ref{appendix:exitence} and \ref{appendix:ss}.

\begin{theorem}\label{existence}
For any initial value  $u^0 \in \mathcal S$, where $\mathcal S=[0,1]^4$,  there exists a unique  global solution  $u \in C^\infty([0, \infty)\times \mathcal M)$ of the system \eqref{eq:model} and $u(t,\mu) \in \mathcal S$ for all $t \in [0, \infty)$ and any $\mu\in\mathcal{M}$.
\end{theorem}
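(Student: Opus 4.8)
The plan is to combine three classical ingredients: local existence and uniqueness from the smoothness of the right-hand side, forward invariance of the box $\mathcal{S}=[0,1]^4$ to obtain an a priori bound, and the continuation (escape) lemma to upgrade the local solution to a global one. First I would observe that each $F_i(\cdot,\mu)$ is a polynomial in $u$ and that $f(\mu)=\mu/(\mu+K)$ is smooth on $\mathcal{M}$ because $\mu+K\ge K>0$ there; hence $F$ is $C^\infty$ jointly in $(u,\mu)$ on $\mathbb{R}^4\times\mathcal{M}$. The Picard--Lindel\"of theorem then yields, for each fixed $\mu$ and each $u^0$, a unique maximal solution on some interval $[0,T_{\max})$, and the theorem on smooth dependence on initial data and parameters gives that the flow $(t,\mu)\mapsto u(t,\mu)$ is $C^\infty$ wherever it is defined.

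The heart of the argument, and the step I expect to be the main obstacle, is showing that $\mathcal{S}$ is forward invariant. I would verify the sign of the vector field on each face of the box. On the lower faces the system is quasi-positive: whenever $u\in\mathcal{S}$ with $u_i=0$ one checks $F_1=\kappa u_3^2\ge 0$, $F_2=u_1 f(\mu)\ge 0$, $F_3=1>0$, and $F_4=\gamma_2 u_2\ge 0$, so no trajectory starting in $\mathbb{R}^4_+$ can cross into a negative coordinate. On the upper faces, where $u_i=1$ and $u\in\mathcal{S}$, one finds $F_1=-1<0$, $F_2=u_1 f(\mu)-1\le f(\mu)-1<0$ (since $f(\mu)<1$ for $K>0$), $F_3=-\gamma_1 u_4\le 0$, and $F_4=-\gamma_3 u_3-1\le -1<0$, so the field never points outward through $u_i=1$. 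The delicate point is that on several faces the field is only tangent rather than strictly inward (for instance $F_1=0$ on $u_1=0$ when $u_3=0$, and $F_3=0$ on $u_3=1$ when $u_4=0$), so a naive first-exit-time argument does not immediately give a contradiction. To handle this cleanly I would invoke Nagumo's subtangentiality theorem: since $\mathcal{S}$ is closed and convex and $F$ is locally Lipschitz, $\mathcal{S}$ is positively invariant precisely when $F(u,\mu)$ lies in the tangent cone $T_{\mathcal{S}}(u)$ for every $u\in\partial\mathcal{S}$, and for the box this condition reduces exactly to the per-coordinate inequalities verified above, the conditions at edges and corners being intersections of the face conditions and hence automatic.

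With invariance established, the a priori bound $u(t,\mu)\in\mathcal{S}$ confines the trajectory to the compact set $\mathcal{S}$ for as long as it exists. The continuation lemma states that a maximal solution either exists for all $t\ge 0$ or else leaves every compact subset of the domain as $t\uparrow T_{\max}$; since the latter is impossible here, we conclude $T_{\max}=\infty$. Finally, combining global existence with the smooth dependence from the first step yields $u\in C^\infty([0,\infty)\times\mathcal{M})$, while the invariance gives $u(t,\mu)\in\mathcal{S}$ for all $t\ge 0$ and all $\mu\in\mathcal{M}$, which completes the proof.
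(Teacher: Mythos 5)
Your proposal is correct and follows essentially the same route as the paper's proof: Picard--Lindel\"of for local existence and uniqueness, the identical face-by-face sign checks on $\partial\mathcal{S}$ to establish positive invariance of $\mathcal{S}=[0,1]^4$, and boundedness in a compact set plus the continuation argument for global existence, with smoothness of $F$ in $(u,\mu)$ giving smoothness of the solution. The only cosmetic difference is that you invoke Nagumo's subtangentiality theorem for the invariance step, whereas the paper cites the invariant region theorem of Smoller/Amann with the functions $G_i(u)=-u_i$ and $G_i(u)=u_i-1$; these are equivalent tools for this purpose, both handling the tangency of the field on faces and at edges and corners.
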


%%%%%%%%%%%%%%%%%%
\begin{theorem}\label{ssunique}
For any positive parameter set $\kappa$, $K$, $\gamma_1$ to $\gamma_3$ and $\mu$, there exists a unique steady state $u^\ast(\mu)$ of the system \eqref{eq:model} in $\mathcal{S}=[0,1]^4$. Furthermore, $u^\ast \in C^\infty\big((0,\infty)\big)$. In particular, the steady state is isolated. 
\end{theorem}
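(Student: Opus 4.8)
The plan is to reduce the four stationarity conditions $F(u,\mu)=0$ to a single scalar equation in the variable $u_3$, and then to prove that this equation has exactly one root in the admissible subinterval of $[0,1]$. First I would note that $u_3=0$ cannot occur at a steady state, since the third equation of \eqref{eq:model} reads $1-\gamma_1 u_3 u_4 - u_3 = 0$, which degenerates to $1=0$ at $u_3=0$. Hence at every stationary point $u_3>0$, and the first three equations can be solved explicitly:
\[
u_1 = \frac{\kappa u_3^2}{1+\kappa u_3^2}, \qquad u_2 = u_1 f(\mu), \qquad u_4 = \frac{1-u_3}{\gamma_1 u_3}.
\]
Each factor is a smooth function of $u_3$ (and, for $u_2$, of $\mu$). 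For the point to lie in $\mathcal S$ I need $u_4\in[0,1]$; since $u_4=(1-u_3)/(\gamma_1 u_3)$ is strictly decreasing with $u_4=1$ at $u_3=1/(1+\gamma_1)$ and $u_4=0$ at $u_3=1$, the admissible range is precisely $u_3\in[1/(1+\gamma_1),1]$. On this range $u_1,u_2\in[0,1)$ hold automatically, using $f(\mu)<1$.

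Next I would substitute these expressions into the fourth equation. After rearrangement it becomes $\phi(u_3):=\gamma_2 u_2(1-u_4)-u_4(1+\gamma_3 u_3)=0$. Existence of a root follows from the intermediate value theorem: at the left endpoint $u_3=1/(1+\gamma_1)$ one has $u_4=1$, so $\phi=-(1+\gamma_3 u_3)<0$, whereas at $u_3=1$ one has $u_4=0$, so $\phi=\gamma_2 u_2>0$ for $\mu>0$. For uniqueness I would show that $\phi$ is strictly increasing on $[1/(1+\gamma_1),1]$: the term $\gamma_2 u_2(1-u_4)$ is a product of the non-negative, strictly increasing factors $u_2(u_3)$ and $1-u_4(u_3)$, hence strictly increasing, while the subtracted term $u_4(1+\gamma_3 u_3)=\tfrac{1}{\gamma_1}\bigl(u_3^{-1}+(\gamma_3-1)-\gamma_3 u_3\bigr)$ has derivative $\tfrac{1}{\gamma_1}(-u_3^{-2}-\gamma_3)<0$ and is therefore strictly decreasing. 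Thus $\phi'>0$ throughout, which yields a single root $u_3^\ast(\mu)$ and, via the explicit formulas, a unique $u^\ast(\mu)\in\mathcal S$.

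For smoothness and isolatedness I would invoke the implicit function theorem on the scalar equation. Since $\phi$ is jointly smooth in $(u_3,\mu)$ on the open set $\{u_3>0,\ \mu>0\}$ and $\partial_{u_3}\phi=\phi'\neq 0$ at the root, the scalar implicit function theorem gives $u_3^\ast\in C^\infty\big((0,\infty)\big)$; composing with the smooth formulas for $u_1,u_2,u_4$ then gives $u^\ast\in C^\infty\big((0,\infty)\big)$. Isolatedness is immediate from uniqueness in $\mathcal S$ together with the fact that $u_3^\ast$ is a simple root of $\phi$.

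The hard part will be the monotonicity of $\phi$ in the second paragraph: this is the one genuinely nontrivial step, and it rests on the sign bookkeeping above, in particular on recognizing that the coupling through $u_4=(1-u_3)/(\gamma_1 u_3)$ forces the two competing terms of $\phi$ to vary monotonically in \emph{opposite} directions. An alternative route for the smoothness claim would be to verify that the full Jacobian \eqref{eq:jac} is nonsingular at $u^\ast$, but the scalar reduction makes the argument considerably cleaner and simultaneously delivers existence, uniqueness, smoothness, and isolatedness from the single inequality $\phi'>0$.
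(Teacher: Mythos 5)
Your proof is correct, and it takes a genuinely different route from the paper's. Both arguments make the same first move: a steady state in $\mathcal S$ forces $u_3>0$, the first three equations express $u_1,u_2,u_4$ as monotone functions of $u_3$, and membership in $\mathcal S$ pins $u_3$ to $\left[\tfrac{1}{1+\gamma_1},1\right]$, reducing everything to a scalar root problem. The divergence is at uniqueness. The paper clears denominators and works with the quartic polynomial \eqref{eq:phifunction}, whose coefficients are sign-indefinite for general parameters (e.g.\ $1-\gamma_3<0$ for $\gamma_3>1$), so direct monotonicity is unavailable; it therefore proves existence abstractly via Brouwer's fixed point theorem (using positive invariance of $\mathcal S$), shows in Lemma \ref{prp_ss_detjac_pos} that $\det J(u^\ast,p)>0$ at every steady state, and runs a continuation argument over the connected parameter space $\mathcal P$: the number of steady states cannot change (no boundary crossings by Lemma \ref{prp_ss_exist}, no saddle-node or pitchfork bifurcations since $\det J>0$), so it suffices to verify uniqueness at the single point $f=\tfrac12$, $\kappa=\gamma_1=\gamma_2=\gamma_3=1$, where the quartic is visibly monotone. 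You avoid all of this machinery by \emph{not} clearing denominators: in the rational form $\phi(u_3)=\gamma_2 u_2(1-u_4)-u_4(1+\gamma_3 u_3)$ the two competing terms are strictly monotone in opposite directions for \emph{every} admissible parameter set (your algebra checks out: multiplying your $\phi$ by $\gamma_1 u_3(1+\kappa u_3^2)>0$ recovers exactly the paper's quartic), and the single inequality $\phi'>0$ simultaneously yields existence (sign change plus the intermediate value theorem), uniqueness, smoothness (scalar implicit function theorem), and isolation. Your version is more elementary and uniform in the parameters—no Brouwer, no homotopy, no special parameter point. What the paper's detour buys is the byproduct $\det J(u^\ast(\mu),\mu)>0$, which it reuses later (in the proof of Theorem \ref{theorem1} to conclude $\lambda_{3,4}<0$, and in Appendix \ref{appendix:spectrum}); under your approach that fact would still require a separate computation.
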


%%%%%%%%%%%%%%%%%%%%%%%%%%%%%%%%
\section{Local Hopf bifurcation}\label{sec:local}
In numerical simulations of the model  \eqref{eq:model} with sufficiently high values of the parameter $\gamma_1$ we observe a stable steady state for small $\mu$, which becomes unstable at some critical  parameter value  $\mu_1$ and stable again after some value $\mu_2>\mu_1$ \citep{Claus_2012}. Between these values, i.e. for $\mu \in (\mu_1, \mu_2)$, numerical simulations of the system  \eqref{eq:model} show the  existence of  stable limit cycles depending continuously on $\mu$.
In this section we show that at both critical values of $\mu$ a  Hopf bifurcation occurs. Since experiments have shown that external zinc concentrations above $30\,\mu$M are lethal for \emph{A. thaliana} \citep{Talke_2006}, from now on we will use the upper bound $m=30$ for $\mu$, i.e. $\mu \in \mathcal{M}=[0,30]$ [compare Eq. \eqref{eq:form}]. The model parameters are chosen as in Sect.~\ref{sec:model}.

\begin{theorem}\label{theorem1}
There exist two critical values $\mu_1, \mu_2 \in \mathcal{M}$ of the external zinc concentration  for which a Hopf bifurcation occurs in  the system \eqref{eq:model}. 
\end{theorem}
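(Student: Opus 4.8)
The plan is to reduce the existence of a Hopf bifurcation to a sign-change argument for a single scalar function of $\mu$ built from the characteristic polynomial of the linearization at the steady state. By Theorem~\ref{ssunique}, the steady state $u^\ast(\mu)$ is unique, isolated, and depends smoothly on $\mu$, so $J(u^\ast(\mu),\mu)$ from \eqref{eq:jac} is a smooth matrix-valued function of $\mu$ on $\mathcal M$. Its characteristic polynomial is the quartic
\[
p(\lambda;\mu)=\lambda^4+a_1(\mu)\lambda^3+a_2(\mu)\lambda^2+a_3(\mu)\lambda+a_4(\mu),
\]
with coefficients $a_i$ smooth in $\mu$. First I would record the standard fact that $J$ has a pair of purely imaginary eigenvalues $\pm i\omega$, $\omega>0$, precisely when the third Hurwitz determinant vanishes: substituting $\lambda=i\omega$ and separating real and imaginary parts yields $\omega^2=a_3/a_1$ together with
\[
\Delta(\mu):=a_1(\mu)a_2(\mu)a_3(\mu)-a_3(\mu)^2-a_1(\mu)^2a_4(\mu)=0.
\]
Provided $a_1,a_3,a_4>0$ at such a point, the remaining two eigenvalues have negative real part, so no further eigenvalues sit on the imaginary axis.

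With this reformulation the bifurcation values are located by analyzing $\Delta$. Since the coefficients $a_i$ cannot be written in closed form---$u^\ast(\mu)$ is only implicitly defined---I would evaluate $\Delta(\mu)$ along $\mathcal M=[0,30]$ using the steady-state branch of Theorem~\ref{ssunique} and exhibit that $\Delta$ is positive near the endpoints but dips below zero on an interior interval. Continuity of $\Delta$ (inherited from smoothness of $u^\ast$) and the intermediate value theorem then furnish two values $\mu_1<\mu_2$ in $\mathcal M$ with $\Delta(\mu_1)=\Delta(\mu_2)=0$ and $a_1,a_3,a_4>0$ there, giving a pair $\pm i\omega_j$ on the imaginary axis at each $\mu_j$ with all other eigenvalues in the open left half-plane. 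The sign pattern ``positive--negative--positive'' of $\Delta$ mirrors the observed stable--unstable--stable transition of the steady state.

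To upgrade purely imaginary eigenvalues to a genuine Hopf bifurcation I would verify the transversality (eigenvalue-crossing) condition. Differentiating the characteristic equation implicitly in $\mu$ and isolating the real part of the critical eigenvalue, one obtains that $\frac{d}{d\mu}\Re\lambda(\mu)\big|_{\mu=\mu_j}$ has the same sign as $\Delta'(\mu_j)$, up to a strictly positive factor depending on $a_1,a_3$; hence a simple zero of $\Delta$ at $\mu_j$ forces a transversal crossing. I would therefore check that each zero $\mu_j$ is simple, i.e. $\Delta'(\mu_j)\neq 0$, so that the eigenvalues cross the imaginary axis with nonzero speed and, together with the simplicity of the imaginary pair and the absence of other eigenvalues on the axis, the classical Hopf theorem of \citet{Hopf_1942} applies.

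The main obstacle is that everything hinges on the implicitly defined steady state: without a closed form for $u^\ast(\mu)$, the coefficients $a_i(\mu)$---and thus $\Delta$ and $\Delta'$---must be controlled along the steady-state branch. Making the sign changes and the simplicity of the zeros rigorous, rather than merely numerically plausible, is the delicate point; one would either track $u^\ast(\mu)$ through the defining relations of Theorem~\ref{ssunique} to obtain guaranteed bounds on $\Delta$ at chosen sample points (e.g. via monotonicity or interval estimates) and on $\Delta'$ near each root, or argue directly from the structure of \eqref{eq:jac}. Verifying that the positivity $a_1,a_3,a_4>0$ persists across both crossings---so that no additional eigenvalue touches the imaginary axis---must likewise be maintained throughout.
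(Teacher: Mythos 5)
Your proposal is correct in substance, verifies the same classical Hopf hypotheses, but takes a genuinely different technical route from the paper. The paper works directly on the spectrum: it computes the eigenvalues of $J(u^\ast(\mu),\mu)$ numerically along the steady-state branch (Hessenberg reduction plus QR iteration), exhibits the complex pair crossing the imaginary axis at $\mu_1\approx 0.19$ and $\mu_2\approx 12.64$, checks transversality by numerically differentiating $\Re\lambda_{1,2}$, and obtains simplicity and non-resonance of the imaginary pair analytically from $\det J>0$ (proved for all parameters in Lemma~\ref{prp_ss_detjac_pos}) and $\operatorname{tr} J<0$, which force the remaining eigenvalues into the open left half-plane. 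You instead collapse the detection problem onto the scalar Hurwitz function $\Delta=a_1a_2a_3-a_3^2-a_1^2a_4$ (this is Liu's eigenvalue-free Hopf criterion): $\Delta=0$ together with $a_1,a_3,a_4>0$ is equivalent to a simple purely imaginary pair $\pm i\omega$, $\omega^2=a_3/a_1$, with the remaining quadratic factor $\lambda^2+a_1\lambda+a_1a_4/a_3$ Hurwitz-stable; the two critical values then come from sign changes of $\Delta$ via the intermediate value theorem, and transversality reduces to simplicity of the zeros of $\Delta$. What your route buys: no eigenvalue tracking or labeling, and the quantities to be certified ($\Delta$, $a_i$, $\Delta'$) are polynomial in the entries of $J$, hence in $(u^\ast,\mu)$, so the numerical sign checks at sample points are in principle certifiable (e.g.\ by interval arithmetic through the defining quartic $\phi$ of Lemma~\ref{prp_ss_exist}); moreover $a_1=-\operatorname{tr}J>0$ and $a_4=\det J>0$ hold analytically for all parameters by the paper's own results, so only $a_3>0$ and the sign pattern of $\Delta$ need numerical input. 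What the paper's route buys: the explicit eigenvalues and frequencies $\omega$ at the critical points, which are reused downstream (the normal-form coefficients in Sect.~\ref{sec:normal}, Table~\ref{tab:critical}, and the Floquet analysis), plus a global picture of the spectrum (Fig.~\ref{fig:eigen}, Appendix~\ref{appendix:spectrum}). Both proofs share the same irreducible reliance on numerics along the implicitly defined branch $u^\ast(\mu)$, which you flag honestly as the delicate point.

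One small slip in your transversality step: the proportionality factor between $\frac{d}{d\mu}\Re\lambda$ and $\Delta'$ is strictly \emph{negative}, not positive. Writing the characteristic polynomial as $\bigl(\lambda^2-2\alpha\lambda+\alpha^2+\beta^2\bigr)\bigl(\lambda^2+p\lambda+q\bigr)$, one checks $\Delta\equiv 0$ on the set $\{\alpha=0\}$, hence $\Delta'(\mu_j)=C\,\alpha'(\mu_j)$ with $C=\partial_\alpha\Delta\big|_{\alpha=0}=-2p\bigl((\beta^2-q)^2+p^2\beta^2\bigr)<0$. This does not affect your conclusion---a simple zero of $\Delta$ still forces a transversal crossing, which is all Theorem~\ref{theorem1} needs---but the sign matters if you want to read off the direction of the stability exchange from $\Delta'$, as in the stable--unstable--stable pattern you invoke.
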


\begin{proof} 
We shall show that all criteria for existence of  a local Hopf bifurcation  \citep{Hopf_1942, Hassard_1981} are satisfied by the system  \eqref{eq:model}. The right hand side $F: \mathbb R^4\times \mathcal M  \to \mathbb R^4$ of   \eqref{eq:model} is  continuously differentiable in $u$  and $\mu$. Theorem \ref{ssunique} guarantees the existence of a unique equilibrium $u^\ast(\mu)$ inside the invariant set $\mathcal S$. Using Newton  and continuation methods  we find numerically an explicit family of unique positive stationary solutions $u^\ast(\mu)$ of \eqref{eq:model}. By Theorem \ref{ssunique} the equilibrium $u^\ast(\mu)$  is isolated. 

From the continuity of $f(\mu)=\frac{\mu}{\mu +K}$, of $u^\ast(\mu)$ with respect to  $\mu$, and of the determinant, we obtain that the eigenvalues $\lambda_i(\mu)$, $i=1,2,3,4$,  of the Jacobian $J(u^\ast(\mu), \mu)$ evaluated at the stationary solution,  vary continuously with $\mu$. 

A diagram of these eigenvalues  in the complex plane  for different values of $\mu$ is shown in Fig.~\ref{fig:eigen}.  
For $\mu=0$ the eigenvalues can be calculated explicitly and are equal to $\lambda_{1,2}=-1$, $\lambda_3=-\kappa-1=-21$ and $\lambda_4=-\gamma_3-1 = -1673$. For $\mu \in (0,30]$ the eigenvalues were computed numerically by a reduction of the Jacobian matrix to Hessenberg form followed by QR iteration as described in \citet{nr3}.
 
\begin{figure}
 \includegraphics[width=\textwidth]{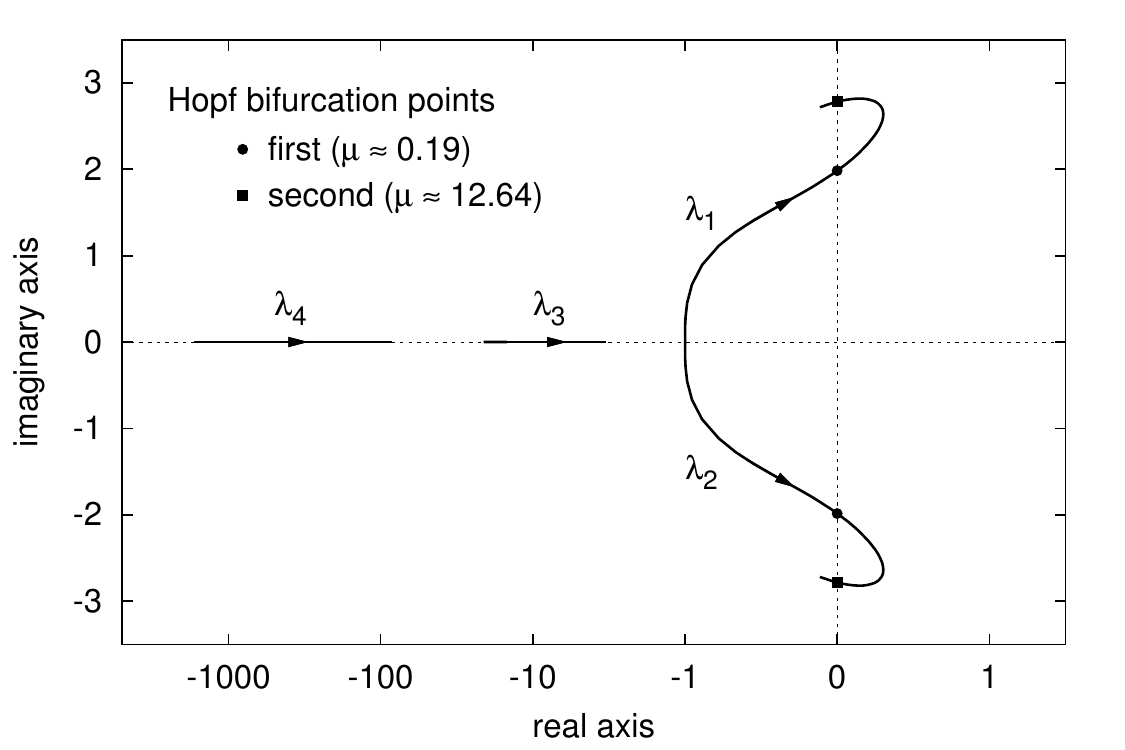}
\caption{Eigenvalues of the Jacobian matrix of F at the steady state for varying external zinc concentrations between $0$ and $30$. Arrows show the direction of increasing external zinc concentration. The eigenvalues $\lambda_3$ and $\lambda_4$ are real and negative for the entire range of external zinc concentrations, while the complex conjugate pair $\lambda_{1,2}$ crosses the imaginary axis twice. These points are marked as first and second Hopf bifurcation points.}
\label{fig:eigen}
\end{figure}

 The two smallest eigenvalues $\lambda_3$ and $\lambda_4$ have negative real parts for all  $\mu \in \mathcal{M}=[0, 30]$, while $\lambda_1$ and $\lambda_2$ form a complex conjugate pair that crosses the imaginary axis in two points (Fig.~\ref{fig:eigen}).
The values of $\lambda_1,\ldots,\lambda_4$ for the critical parameter points $\mu_1=0.189537\ldots$ and $\mu_2=12.6432\ldots$ at which  $J(u^\ast(\mu), \mu)$ exhibits two purely imaginary eigenvalues are shown in Table \ref{tab:critical}.  The purely imaginary eigenvalues $\lambda_{1,2}(\mu_i)$ are simple and none of the multiples $n \,\lambda_{1,2}(\mu_i)$, with $n \in \mathbb N\setminus\{1\}$, is an eigenvalue of $J(u^\ast(\mu_i), \mu_i)$ for $i=1,2$. This follows from $\operatorname{det} J(u^\ast(\mu_i), \mu_i)>0$ and $\operatorname{tr} J(u^\ast(\mu_i), \mu_i) < 0$ which imply $\lambda_{3,4}(\mu_i)<0$. The  eigenvalues $\lambda_{1, 2}(\mu)$  cross the imaginary axis with non-zero speed, i.e. $\frac{d}{d\mu} \Re \lambda_i (\mu_j)\neq 0$ for $i,j=1,2$, which was verified numerically (Table~\ref{tab:critical}).

A short analysis of the possible eigenvalue combinations is conducted in Appendix \ref{appendix:spectrum}. It is possible to show that all real eigenvalues are smaller than $-1$. Further, either the eigenvalues are: all real, or two are real and the other build a complex pair (case shown in Fig.~\ref{fig:eigen}), or two complex pairs are built, where one has a negative real part, while the sign of the other is indeterminate (numerical simulations suggest also a negative real part). Note that these results are general and valid for all positive parameters $p \in \mathcal{P}$.

\begin{table}
\caption{Critical values of $\mu$ with the corresponding eigenvalues $\lambda_1, \ldots, \lambda_4$ of the Jacobian matrix, derivative of the real part of the eigenvalue with respect to $\mu$ and sign of the real part of the parameter $b$ in the normal form.}
\label{tab:critical}
\begin{tabular}{llllllll}
  \hline\noalign{\smallskip}
	 &value of $\mu $  & $\lambda_{1,2}$ &  $  \lambda_3 $ & $\lambda_4$ & $\frac{d}{d\mu} \Re (\lambda_{1,2}) $ & $\sgn(\Re b)$\\
  \noalign{\smallskip}\hline\noalign{\smallskip}
	$\mu_1$ &$0.189\ldots$& $\pm i1.983\ldots$ & $ -3.474\ldots $ & $-238.6\ldots$ &   $1.83\ldots$ & -1\\
  \noalign{\smallskip}
	$\mu_2$ & $12.64\ldots$ & $\pm i2.782\ldots$ & $ -5.599\ldots$ & $ -84.43\ldots$  &   $-0.0126\ldots$ & -1\\
  \noalign{\smallskip}\hline
\end{tabular}
\end{table}

Therefore, based on the  Hopf Bifurcation Theorem  \citep{Hopf_1942, Hassard_1981} we conclude that the system  \eqref{eq:model} has  a  one-parameter family of periodic solutions in a neighborhood of both critical values  $\mu_{1}$ and $\mu_2$, bifurcating from the stationary solution. 
\end{proof}

\section{Derivation of the normal form and stability of Hopf bifurcation} \label{sec:normal}

In the last section we showed the existence of periodic orbits in the neighborhood of the two Hopf bifurcation points.  Further information is needed to determine the stability and direction of the periodic solutions.

\begin{theorem}\label{theorem2}
At both critical parameter values  $\mu_1$ and $\mu_2$ a supercritical Hopf bifurcation occurs for the system \eqref{eq:model} and the bifurcating periodic solutions are stable.  In the first critical point $\mu_1$ the orbits arise  for  increasing values  $\mu>\mu_1$, whereas  in the second critical point $\mu_2$ the orbits arise  for decreasing values $\mu<\mu_2$.
\end{theorem}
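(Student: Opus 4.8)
The plan is to establish the type and direction of each Hopf bifurcation by reducing the four-dimensional system to the two-dimensional centre manifold at each critical point and computing the first Lyapunov coefficient, whose sign decides sub- versus supercriticality. At $\mu=\mu_i$ ($i=1,2$) the Jacobian $J(u^\ast(\mu_i),\mu_i)$ has a simple pair of purely imaginary eigenvalues $\lambda_{1,2}=\pm i\omega_i$ together with $\lambda_3,\lambda_4<0$ (Theorem~\ref{theorem1} and Table~\ref{tab:critical}); hence the centre eigenspace is two-dimensional and the rest of the spectrum is hyperbolic and stable. I would first shift the equilibrium to the origin by setting $v=u-u^\ast(\mu_i)$ and Taylor-expand $F$ through cubic order, so that $\dot v = J v + \tfrac12 B(v,v) + \tfrac16 C(v,v,v) + \ord(|v|^4)$ with $B$ and $C$ the second- and third-order derivative tensors of $F$. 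Since the only nonlinearities in \eqref{eq:model} are the products $u_3^2u_1$, $u_3^2$, $u_3u_4$ and $u_2u_4$, both $B$ and $C$ are sparse and can be written down explicitly.

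Second, I would compute a right eigenvector $q\in\C^4$ with $Jq=i\omega_i q$ and an adjoint eigenvector $p$ with $J^\top p=-i\omega_i p$, normalised so that $\langle p,q\rangle=1$. Because $u^\ast(\mu_i)$ is only available numerically, these eigenvectors are obtained numerically as well. Projecting the nonlinear terms onto the critical eigenspace and eliminating the quadratic dependence on the stable modes through the centre-manifold approximation yields the resonant coefficients $g_{20},g_{11},g_{02},g_{21}$ of the Poincar\'e normal form, and the first Lyapunov coefficient follows from the Hassard--Kazarinoff--Wan formula
\begin{equation*}
  b = \frac{i}{2\omega_i}\Big(g_{20}g_{11} - 2|g_{11}|^2 - \tfrac13|g_{02}|^2\Big) + \frac{g_{21}}{2}.
\end{equation*}
Here $\Re b<0$ gives a supercritical bifurcation with an orbitally asymptotically stable limit cycle, while $\Re b>0$ gives a subcritical one. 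Evaluating $b$ numerically produces $\sgn(\Re b)=-1$ at both $\mu_1$ and $\mu_2$ (Table~\ref{tab:critical}), so both bifurcations are supercritical and the emerging orbits are stable.

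Third, I would fix the branching direction by combining $\Re b<0$ with the transversality derivatives already recorded in Table~\ref{tab:critical}. For a supercritical Hopf bifurcation the periodic orbits live on the side of $\mu_i$ where the equilibrium has lost stability, i.e. where $\Re\lambda_{1,2}(\mu)>0$. At $\mu_1$ one has $\tfrac{d}{d\mu}\Re\lambda_{1,2}=1.83\ldots>0$, so $\Re\lambda_{1,2}>0$ for $\mu>\mu_1$ and the orbits appear for increasing $\mu$; at $\mu_2$ one has $\tfrac{d}{d\mu}\Re\lambda_{1,2}=-0.0126\ldots<0$, so $\Re\lambda_{1,2}>0$ for $\mu<\mu_2$ and the orbits appear for decreasing $\mu$. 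This reproduces exactly the two directions claimed in the theorem.

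The main obstacle is not conceptual but computational and numerical. The centre-manifold reduction requires correctly folding the two stable directions $\lambda_3,\lambda_4$ into the cubic coefficient $g_{21}$: they enter through the solutions $w_{20}$ and $w_{11}$ of the linear systems $(2i\omega_i I - J)\,w_{20}=h_{20}$ and $-J\,w_{11}=h_{11}$, where $h_{20},h_{11}$ collect the relevant quadratic terms, and any sign or normalisation slip there propagates directly into $\Re b$. Moreover, since $u^\ast(\mu_i)$, the eigenvectors, and hence all $g_{jk}$ are computed numerically, I would need to verify that $\Re b$ is safely bounded away from zero so that its sign is robust against the numerical error in $u^\ast(\mu_i)$ and in the critical values $\mu_i$; this is the step that carries the real risk.
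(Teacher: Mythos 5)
Your proposal is correct and takes essentially the same route as the paper: center-manifold reduction at each critical value, computation of the cubic normal-form coefficient, numerical evaluation showing $\Re b<0$ at both $\mu_1$ and $\mu_2$, and the transversality signs from Table~\ref{tab:critical} fixing the branching directions; your Hassard--Kazarinoff--Wan $g_{jk}$ formulation with $w_{20},w_{11}$ is just an equivalent rewriting of the paper's Haragus--Iooss ansatz, where $\psi_{200},\psi_{110}$ solve \eqref{eq_psi200}, \eqref{eq_psi110} and $b$ is given by the solvability condition \eqref{formula_b}. The paper likewise relies on numerics for the sign of $\Re b$, so your closing caveat about numerical robustness applies equally to its proof.
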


\begin{proof}
In order to apply the results known from literature, we  shift the critical parameter value and stationary solution  to zero by setting $\tilde \mu := \mu - \mu^\ast$,    $\tilde u := u - u^\ast(\tilde \mu+ \mu^\ast)$  and rewrite \eqref{eq:model} as
\begin{eqnarray}\label{eq:model_2}
\frac{d\tilde u}{dt} = F(\tilde u + u^\ast(\tilde \mu+ \mu^\ast), \tilde \mu + \mu^\ast) \; .
\end{eqnarray}
As $F$ is smooth, by \citet[Theorems 2.9, 3.3]{Haragus_2011} and  \citet{Ipsen_1998} and using the results of Theorem~\ref{theorem1},  we conclude that the system \eqref{eq:model_2} possesses a two-dimensional center manifold for sufficiently small $\tilde \mu$. 
Eq. \eqref{eq:model_2} reduced on this manifold is transformed by a specific polynomial transformation into the normal form \citep{Hassard_1981}
 \begin{equation}\label{normal_form}
 \frac{dA}{dt} = i\omega \,  A  +  a\, \tilde \mu\, A + b \, A \, |A|^2 + \ord\left(|A|\left(|\tilde \mu| + |A|^2\right)^2\right)   \; .
 \end{equation}
The solutions of \eqref{eq:model_2} on the center manifold are then of the form 
 \begin{equation}\label{center_manifold_u_normal}
 \tilde u = A \xi + \overline{A\xi} + \Psi( A, \bar A, \tilde \mu), \qquad A(t) \in \mathbb C  \; , 
 \end{equation}
with $\Psi(0,0, 0)=0$, $\del{A} \Psi(0,0, 0)=0$, and $\del{\cc{A}}\Psi(0,0, 0)=0$,  where $\xi $ is the eigenvector of $J(u^\ast, \mu^\ast)$ for the purely imaginary eigenvalue $\lambda_1= i\omega$. 
For $ \Psi( A, \bar A, \tilde \mu)$ a polynomial ansatz
\begin{align}\label{phi}
	\Psi(A,\cc{A}, \tilde \mu)=\sum \psi_{rsq} A^r\cc{A}^s \tilde \mu ^q,
\end{align}
with  $\psi_{100}=\psi_{010}=0$ and  $\psi_{rsq}=\cc{\psi}_{srq}$ is made \citep{Haragus_2011}.
Substituting the form \eqref{center_manifold_u_normal} for $u$ into equations \eqref{eq:model_2} we obtain 
 \begin{eqnarray}\label{transform_u_A}
 &&(\xi + \partial_A \Psi ) \frac{ dA}{ dt} +
   (\overline\xi + \partial_{\bar A} \Psi ) \frac{ d\bar{A}}{ dt} \\  
&&   \hspace{2 cm } =  F(A\xi +  \overline{A\xi} + \Psi (A, \bar A, {\tilde \mu})+ u^\ast(\tilde \mu+ \mu^\ast), \tilde \mu + \mu^\ast)\; . \nonumber
 \end{eqnarray}
Knowing the first term in the Taylor expansion of $F$ at $\tilde \mu =0$ and $\tilde u =0$ to be $F(u^\ast(\mu^\ast),\mu^\ast)=0$ we get
\begin{equation}\label{taylor}
\small
\begin{split}
	F(\tilde u + u^\ast(\tilde \mu+ \mu^\ast), \tilde \mu + \mu^\ast) &= D^{10}F \cdot \tilde u + D^{10}F\cdot \del{\mu}u^*\til{\mu} + D^{01}F\til{\mu}\\
		&\phantom{=} + \frac{1}{2} D^{20}F (\tilde u, \tilde u) + D^{11}F \cdot \tilde u \,  \tilde{\mu}
		 + \frac 12 D^{20}F (\del{\mu}u^\ast, \del{\mu}u^\ast) \til{\mu}^2\\
		 &\phantom{=} + \frac 12 D^{10}F \cdot \partial^2_\mu u^\ast\tilde{\mu}^2 + \frac{1}{2} D^{02}F \tilde{\mu}^2 + \ldots \; ,
\end{split}
\end{equation}
where $\tilde u = A\ev + \cc{A\ev} + \Psi( A, \bar A, \tilde \mu)$ and the derivatives are defined by
\begin{align*}
D^{pq}F &:=\frac{\partial^{p+q} F(u,\mu)}{\partial u^p \partial \mu^q}\bigg|_{(u^*(\mu^*),\mu^*)}.
\end{align*}
As before, we  write  $J=D^{10}F(u^\ast(\mu^\ast), \mu^\ast)$.  
Higher order derivatives in $u$ are multilinear forms  $D^{20}F:(\C^n)^2\rightarrow \C^n$ and $D^{30}F:(\C^n)^3\rightarrow \C^n$, with $n=4$. These forms are applied to $x, y, z \in \C^n$ to obtain vectors in $\C^n$ where the $i$-th components are given by
\begin{align*}
  (D^{20}F(x,y))_i &:= \sum_{j,k}^n \frac{\partial^2 F_i}{\partial u_j \partial u_k} x_j y_k \; ,\\
  (D^{30}F(x,y,z))_i &:= \sum_{j,k,l}^n \frac{\partial^3 F_i}{\partial u_j \partial u_k \partial u_l} x_j y_k z_l \; .
\end{align*}

For $F$ in the  system \eqref{eq:model} the second and third derivatives  $D^{20}F(x,y)$ and $D^{30}F(x,y,z)$, where $x,y,z\in\C^4$,  are given by
\begin{align*}
	D^{20}F(x,y)= 
	\begin{pmatrix}
	-2\kappa u^\ast_3(x_1y_3 + x_3y_1) + 2\kappa (1-u^\ast_1)x_3y_3\\
	0\\
	-\gamma_1 (x_3y_4 + x_4y_3)\\
	-\gamma_2 (x_2y_4 + x_4y_2) - \gamma_3(x_3y_4 + x_4y_3)
	\end{pmatrix}
\end{align*}
and
\begin{align*}
	D^{30}F(x,y,z)=
	\begin{pmatrix}
	-2 \kappa (x_1y_3z_3 + x_3y_1z_3 + x_3y_3z_1) \\ 0\\ 0\\ 0
	\end{pmatrix}.
\end{align*}

Using the Taylor expansion \eqref{taylor} of $F$ together with  the expression  \eqref{phi} for $\Psi$ and  \eqref{normal_form} in the equation \eqref{transform_u_A},  and comparing the coefficients we obtain 
\begin{align*}
	\ord(A):\quad 
	&& (i\omega-J) \ev &= 0\; , & (-i\omega-J) \cc{\ev} &= 0\; .
\end{align*}
This is  the eigenvalue problem for the Jacobian of $F$ in $(u^*(\mu^*),\mu^*)$ for purely imaginary eigenvalues $\pm i\omega$. 
Further on, we find:
\begin{align}
	\ord(\til{\mu}):\quad 
	& -J \psi_{001} = D^{01}F + D^{10}F \cdot \del{\mu}u^*  \label{eq_psi001} \; ,\\
	\ord(\til{\mu}A):\quad 
	& (i\omega -J) \psi_{101}  = - a\ev+ D^{20}F(\ev, \psi_{001}) + D^{11}F\cdot \ev \; . \label{eq_a}
\end{align}
Equation  \eqref{eq_a} is solvable if its right hand side is orthogonal to the kernel of the adjoint operator $-i\omega - J^\ast$. Thus we conclude 
\begin{equation}\label{formula_a}
a\prod{\ev}{\ev^\ast} = \prod{D^{20}F(\ev, \psi_{001}) + D^{11}F \ev}{\ev^\ast},
\end{equation}
where $\prod{\cdot}{\cdot}$ denotes the Hermitian scalar product, $J^*=J^T$ is the adjoint matrix (or simply transposed, since $J$ is real) to $J$ and $\ev^*$ is the adjoint eigenvector to $-i\omega$ fulfilling $J^\ast\ev^\ast=-i\omega\ev^\ast$, scaled with a complex factor to ensure  $\prod{\ev}{\ev^\ast}=1$. For more details see \citet{Haragus_2011}. Considering the fact that $J$ is invertible  and  solving the equation \eqref{eq_psi001} for $\psi_{001}$ we can calculate the parameter $a$ in the normal form using formula  \eqref{formula_a}.

Considering now higher order terms in $A$ and $\overline A$ we obtain 
\begin{align}
	\ord(A^2):
	&& (2i\omega - J)\psi_{200} &= \frac{1}{2} D^{20}F(\ev,\ev) \label{eq_psi200}\; ,\\
	\ord(|A|^2): 
	&& - J \psi_{110}& =  D^{20}F( \ev,\cc{\ev}) \label{eq_psi110}\; ,\\
	\ord(A|A|^2): 
	&& b\ev + (i\omega - J) \psi_{210} &=D^{20}F(\ev,\psi_{110}) + D^{20}F(\cc{\ev},\psi_{200})\nonumber \\
	&& &+ \frac{1}{2} D^{30}F(\ev,\ev,\cc{\ev})\; . \label{eq_b}
\end{align}
The solvability condition for equation  \eqref{eq_b} yields 
\begin{equation}\label{formula_b}
b\prod{\ev}{\ev^*}=\prod{D^{20}F(\ev,\psi_{110}) + D^{20}F(\cc{\ev},\psi_{200}) + \frac{1}{2} D^{30}F(\ev,\ev,\cc{\ev})}{\ev^*}\; .
\end{equation}
Since $2i\omega$ and $0$ are not eigenvalues of $J$, we can solve equations \eqref{eq_psi200} and \eqref{eq_psi110} for $\psi_{200}$ and $\psi_{110}$, respectively, and determine  the constant $b$ by formula \eqref{formula_b}. 

Due to the normal form theory for dynamical systems, the sign of the real part of $b$ determines the type of the Hopf bifurcation and stability of the emerging periodic solutions.
If $\Re b < 0 $, stable periodic orbits arise to the side of the bifurcation where the steady state becomes unstable. This is called a \emph{supercritical} Hopf bifurcation. A \emph{subcritical} Hopf bifurcation occurs if the real part of $b$ is positive and unstable orbits arise to the side of the bifurcation where the steady state is stable.

Applying standard numerical methods like Newton's and continuation methods used to compute stationary solutions and eigenvalues of the Jacobian matrix in Sect.~\ref{sec:local}, as well as LU decomposition  \citep{nr3} for the matrices in \eqref{eq_psi200}, \eqref{eq_psi110} and \eqref{formula_b},  we obtain $b$  for the critical  parameter values $\mu_1$ and $\mu_2$.  For both critical values of the parameter, $\Re b$ is negative (Table \ref{tab:critical}). Consequently,  both Hopf bifurcations are supercritical and the families of periodic orbits are stable. The sign of the derivative $\frac{d}{d\mu} \Re \lambda_{1} (\mu_i)$, $i=1,2$ determines the direction in which the orbits appear. In the first critical point they arise to the right ($\mu_1<\mu$), while at the second they emerge to the left ($\mu<\mu_2$), as clearly seen in Fig.~\ref{fig:orbits}.

\end{proof}

%%%%%%%%%%%%%%%%%%%%%%%%%%%%%%%
%%%%%%%%%%%%%%%%%%%%%%%%%%%%%%%%%
\section{Global continuation of a family of periodic solutions} \label{sec:global}

While the results in Sect.~\ref{sec:local} only give the existence of stable periodic orbits in a small neighborhood of the critical  values of $\mu$, we show the existence of a continuous path of periodic orbits between those two bifurcation points. For this purpose we apply Theorem A in \citet{Alexander_1978} and Theorem 4.1, Propositions~5.1, 5.2 and  Snake Termination Principle in \citet{Mallet_1982}. In more detail those theorems state that: A  family of periodic orbits bifurcating from the stationary solution at a Hopf point can be continuously extended
and become either unbounded with respect to  parameter $\mu$, period $T$, or solution $u$, or converge to another Hopf point. 

Following the notation in \citet{Mallet_1982}, the center index \zhong can be calculated to distinguish so-called \emph{sources} ($\textnormal{\zhong}=+1$) from \emph{sinks} ($\textnormal{\zhong}=-1$) of the path and to give a direction to the path of orbits connecting two Hopf points. The center index is defined as 
\[
\textnormal{\zhong}(u^\ast(\mu^\ast),\mu^\ast)=\chi (-1)^{E(\mu^*)}\; ,
\]
where $E(\mu) \in \mathbb{N}$ denotes the sum of the multiplicities of the eigenvalues of the Jacobian matrix $J(u^*(\mu),\mu)$ having strictly positive real parts. The number $\chi \in \mathbb{Z}$ is the crossing number denoting the net number of pairs of eigenvalues crossing the imaginary axis at $\mu^\ast$, defined by
\[
 \chi = \frac{1}{2}(E(\mu^\ast+)-E(\mu^\ast-))\; ,
\]
where $E(\mu^\ast+)$ and $E(\mu^\ast-)$ denote right- and left-hand limits of $E$ at $\mu^\ast$.

We shall say that $(u^\ast(\mu^\ast), \mu^\ast)$ is a Hopf point if the condition of the Hopf bifurcation theorem are satisfied. 

\begin{theorem}
The model \eqref{eq:model} possesses a continuous path of periodic solutions connecting two Hopf bifurcation points, where the first Hopf point $(u^\ast(\mu_1), \mu_1)$ is a source and the second Hopf point $(u^\ast(\mu_2), \mu_2)$ is a sink for the path.
\end{theorem}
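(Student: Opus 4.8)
The plan is to combine the local Hopf data from Theorem~\ref{theorem1} with the global continuation machinery of \citet{Alexander_1978} and \citet{Mallet_1982}. Two things must be established: first, that the center indices single out $(u^\ast(\mu_1),\mu_1)$ as a source and $(u^\ast(\mu_2),\mu_2)$ as a sink; and second, that the connected component (snake) of periodic orbits emanating from each Hopf point stays bounded in the parameter $\mu$, the period $T$, and the solution $u$. Once boundedness is secured, the Snake Termination Principle forbids the snake from escaping to infinity, and the balance of source and sink indices then pins the two endpoints to $\mu_1$ and $\mu_2$.

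First I would compute the center index at each critical value from the eigenvalue picture of Fig.~\ref{fig:eigen}. Since $\lambda_3,\lambda_4<0$ throughout $\mathcal M$ and $\lambda_{1,2}$ form the only complex conjugate pair crossing the imaginary axis, the count $E(\mu)$ of eigenvalues with strictly positive real part equals $0$ for $\mu<\mu_1$, equals $2$ for $\mu_1<\mu<\mu_2$, and returns to $0$ for $\mu>\mu_2$; at each bifurcation point $\lambda_{1,2}$ are purely imaginary, so $E(\mu_i)=0$. The signs of $\frac{d}{d\mu}\Re\lambda_{1,2}$ in Table~\ref{tab:critical} fix the crossing numbers: at $\mu_1$ the pair enters the right half-plane, giving $\chi=\tfrac12(E(\mu_1+)-E(\mu_1-))=\tfrac12(2-0)=1$ and hence $\textnormal{\zhong}(u^\ast(\mu_1),\mu_1)=\chi(-1)^{E(\mu_1)}=+1$; at $\mu_2$ the pair returns to the left half-plane, giving $\chi=\tfrac12(0-2)=-1$ and $\textnormal{\zhong}(u^\ast(\mu_2),\mu_2)=-1$. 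Thus $\mu_1$ is a source and $\mu_2$ a sink, as claimed.

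Next I would verify the boundedness conditions required by the termination principle. Boundedness in $u$ is immediate from Theorem~\ref{existence}, since $\mathcal S=[0,1]^4$ is invariant and every periodic orbit therefore lies inside this compact set. Boundedness in $\mu$ holds because $\mu\in\mathcal M=[0,30]$; indeed periodic orbits can only occur where the steady state is unstable, i.e.\ for $\mu\in(\mu_1,\mu_2)$. A lower bound on the period is obtained near each Hopf point, where $T\to 2\pi/\omega_i$ with $\omega_i>0$ denoting the imaginary part of $\lambda_1(\mu_i)$, so $T$ stays bounded away from zero and a period-collapse away from the two critical values is excluded.

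The hard part will be the upper bound on the period. If $T$ were unbounded along the snake, then by compactness of $\mathcal S$ the orbits would accumulate on an invariant set containing the unique equilibrium $u^\ast$, that is, on a homoclinic connection (necessarily homoclinic, since $u^\ast$ is the only steady state by Theorem~\ref{ssunique}). Excluding such a connection is the genuine obstacle: for $\mu\in(\mu_1,\mu_2)$ the equilibrium carries a two-dimensional unstable manifold (from $\lambda_{1,2}$) and a two-dimensional stable manifold (from $\lambda_{3,4}$), and one must rule out their intersection away from $u^\ast$. I would address this through the a~priori bounds furnished by the invariant box together with the Floquet computation of Sect.~\ref{sec:floquet}, which tracks the orbits across $(\mu_1,\mu_2)$ and confirms that the period remains finite. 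With all three boundedness conditions in hand, the Snake Termination Principle of \citet{Mallet_1982} guarantees that the snake issuing from the source at $\mu_1$ cannot terminate at infinity, and since $\mu_1$ and $\mu_2$ are the only Hopf points and their center indices balance, it must terminate at the sink at $\mu_2$, yielding the desired continuous path of periodic solutions.
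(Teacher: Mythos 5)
Your proposal follows essentially the same route as the paper: the identical center-index computation ($E(\mu_i)=0$, $\chi=+1$ at $\mu_1$ and $\chi=-1$ at $\mu_2$, hence source and sink), boundedness of the solution via the invariant box $\mathcal S$, of the parameter via $\mathcal M$, and of the period, followed by the termination principle of \citet{Mallet_1982}. The only difference is presentational: where you frame the period bound as the exclusion of a homoclinic connection before ultimately appealing to the numerical computations of Sect.~\ref{sec:floquet}, the paper directly cites the numerically computed period curve in Fig.~\ref{fig:perifloq}(a), so both arguments rest on the same numerical evidence at exactly the same point.
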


\begin{proof}
We show in the following that for the model \eqref{eq:model} neither parameter nor period nor solution can become unbounded, so the path of periodic orbits bifurcating from the stationary solution must end in another Hopf point. Theorem~\ref{existence} ensures uniform boundedness of solutions of the model \eqref{eq:model}.  The parameter $\mu \in \mathcal M$ is naturally bounded, since zinc concentrations in the soil only appear within a certain range. The least period $T$ of the orbits is a continuous function of $\mu$. Numerical computations show that the period is also bounded for all $\mu \in [\mu_1, \mu_2]$, see Fig.~\ref{fig:perifloq}~(a).

Thus, since parameter, period and solution remain bounded, and the domain $\mathcal M$ of the parameter contains exactly two  bifurcation points,  the path of periodic orbits emerging from one Hopf point must terminate in another Hopf point, see \citet[Propositions~5.1, 5.2]{Mallet_1982}. Therefore,  there exists a continuous family of periodic solutions  between those two Hopf points. An illustration of this path of periodic orbits as a two dimensional projection is given in Fig.~\ref{fig:orbits}.

In our case, exactly two eigenvalues cross the imaginary axis in the critical values $\mu_{1,2}$. At $\mu_{1}$  the two eigenvalues   cross the imaginary axis from left to right, whereas   at  $\mu_2$ they cross the imaginary axis from right to left.  So we obtain $\chi=1$ for $\mu_1$ and  $\chi=-1$ for $\mu_2$.
Then   with $E(\mu_{1,2})=0$ we find $\textnormal{\zhong}(u^\ast(\mu_1), \mu_1)=+1$, so the first Hopf point (left point in Fig.~\ref{fig:orbits}) is a source of the path of periodic orbits. For the second Hopf point (right point in Fig.~\ref{fig:orbits}) we obtain  $\textnormal{\zhong}(u^\ast(\mu_2),  \mu_2) = -1$, showing that it is a sink. This result is in concord with the snake termination principle by \citet{Mallet_1980} stating that if a path of  orbits  emerging from one Hopf point terminates in a second Hopf point, then the two Hopf points must have center indexes of opposite sign.
\end{proof}

\begin{figure}
 \includegraphics[width=\textwidth]{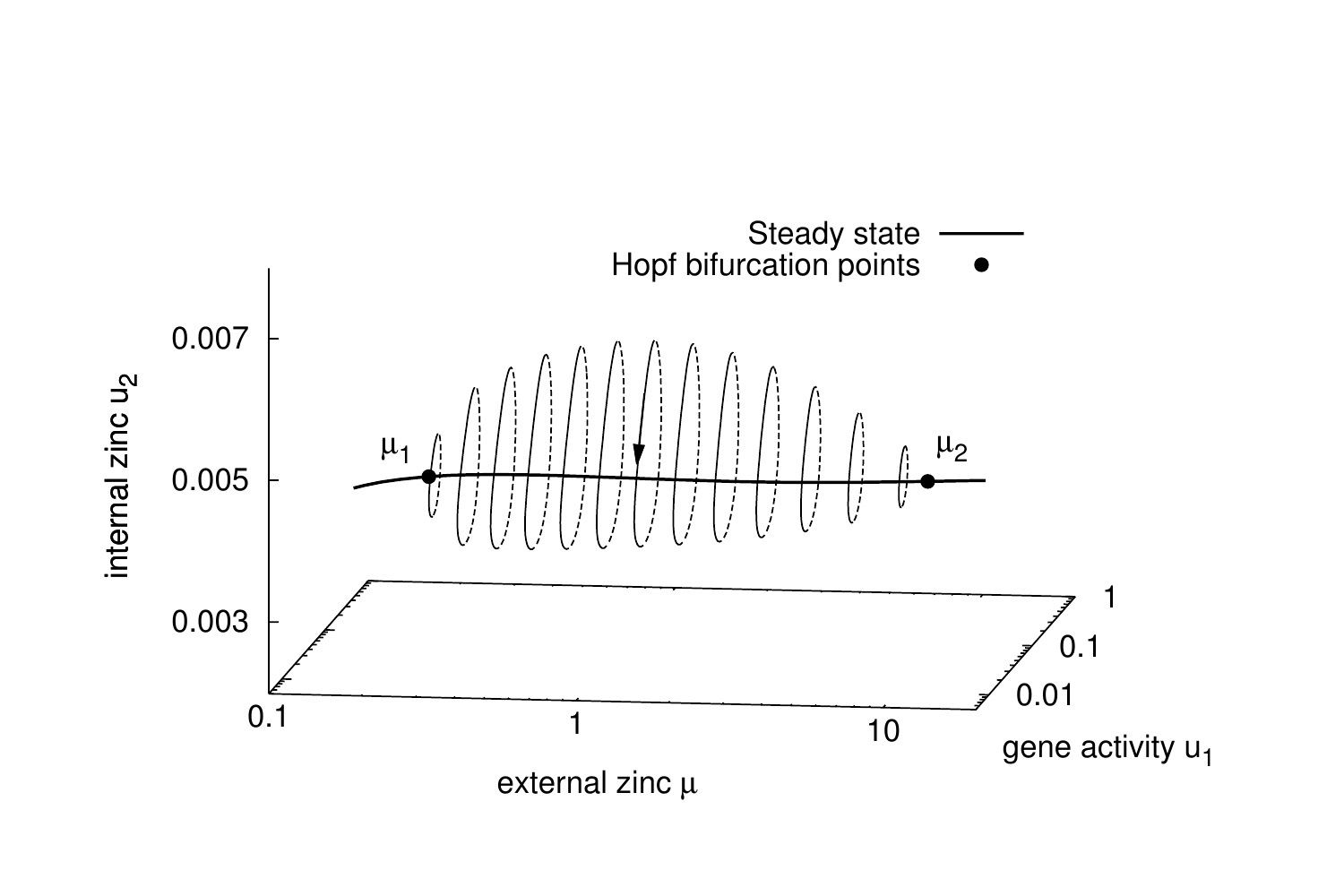}
 \caption{Illustration of the path of periodic orbits between the two Hopf bifurcation points $(u^\ast(\mu_1), \mu_1)$ and $(u^\ast(\mu_2), \mu_2)$. From the four-dimensional system only two dimensions (gene activity and internal zinc concentration) are shown. The thick solid line marks the steady state, thin lines show the stable periodic solutions. The rotation direction is indicated with an arrow.}
 \label{fig:orbits}
\end{figure}

\section{Floquet multipliers and stability of periodic orbits}
\label{sec:floquet}

Though direct numerical simulations suggest stability of the periodic orbits between the two Hopf bifurcation points (cf. Fig.~\ref{fig:orbits}), there may still be slow drifts or spirals around the orbits. Therefore, the stability of the periodic solutions is now analyzed in more detail by studying its Floquet multipliers. These multipliers can be derived as eigenvalues of the linearized Poincar\'e map and equivalently as eigenvalues of the monodromy matrix. In the following, we will focus on the latter.

\begin{theorem}
The periodic solutions of the model \eqref{eq:model} for $\mu \in [\mu_1, \mu_2]$ are asymptotically stable. 
\end{theorem}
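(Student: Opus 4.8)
The plan is to read off the stability of each orbit from its Floquet multipliers, obtained as the eigenvalues of the monodromy matrix of the first variational equation. Fix $\mu \in [\mu_1, \mu_2]$ and let $u_p(\cdot,\mu)$ be the periodic orbit of least period $T(\mu)$ whose existence was established in Sect.~\ref{sec:global}. Linearizing \eqref{eq:model} along this orbit gives the variational equation $\dot v = J(u_p(t,\mu),\mu)\,v$, whose principal fundamental matrix $\Phi(t,\mu)$, normalized by $\Phi(0,\mu)=I$, yields the monodromy matrix $M(\mu):=\Phi(T(\mu),\mu)$. The Floquet multipliers are the four eigenvalues $\rho_1(\mu),\dots,\rho_4(\mu)$ of $M(\mu)$, and orbital asymptotic stability with asymptotic phase is equivalent to all \emph{nontrivial} multipliers lying strictly inside the unit disk. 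Since \eqref{eq:model} is autonomous, $\dot u_p(t,\mu)$ is a nontrivial $T$-periodic solution of the variational equation, so one multiplier is always $\rho_1\equiv 1$; it therefore suffices to show $|\rho_i(\mu)|<1$ for $i=2,3,4$ and every $\mu\in[\mu_1,\mu_2]$.

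Concretely, I would compute $M(\mu)$ by integrating the coupled $(u_p,\Phi)$ system over one period, reusing the periodic-orbit continuation from Sect.~\ref{sec:global} together with the period $T(\mu)$ shown in Fig.~\ref{fig:perifloq}(a), and then extract the eigenvalues of $M(\mu)$ on a fine grid in $\mu$. Two independent consistency checks support the computation. First, Liouville's formula gives $\rho_1(\mu)\,\rho_2(\mu)\,\rho_3(\mu)\,\rho_4(\mu)=\exp\Big(\int_0^{T(\mu)}\operatorname{tr}J(u_p(t,\mu),\mu)\,dt\Big)$; since the diagonal of \eqref{eq:jac} is strictly negative on $\mathcal S$, the trace is strictly negative, so the product of the multipliers has modulus strictly less than one, reflecting contraction of phase-space volume along the orbit. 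Second, the numerically obtained trivial multiplier $\rho_1$ should equal $1$ to within the integration tolerance, furnishing a running accuracy check on $M(\mu)$ at each $\mu$.

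To anchor the numerical continuation at both ends I would match it against the local analysis. As $\mu\downarrow\mu_1$ or $\mu\uparrow\mu_2$ the orbit collapses onto $u^\ast(\mu)$: the nontrivial member of the critical pair tends to $1$ from inside the unit disk, with the supercriticality established in Theorem~\ref{theorem2} ($\Re b<0$) fixing that it approaches from inside rather than outside, while the two remaining multipliers tend to $\exp\!\big(\lambda_{3,4}(\mu_i)\,T\big)$, which lie inside the unit disk because $\lambda_{3,4}(\mu_i)<0$. This agrees with the stability asserted in Theorem~\ref{theorem2} and pins down the branch of multipliers at both endpoints.

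The main obstacle is that the argument is ultimately computer-assisted: the conclusion rests on integrating the variational equation accurately over a full period for every $\mu$, which is delicate wherever $T(\mu)$ is large or the orbit traverses stiff regions in which the large entries $\gamma_1,\gamma_3$ of \eqref{eq:jac} dominate and $\Phi$ develops widely separated scales. Controlling this error, and sampling $\mu$ finely enough that continuity of each $\rho_i(\mu)$ excludes an excursion of some $|\rho_i|$ across the unit circle between grid points, is the crux; the Liouville identity and the $\rho_1\equiv1$ check are the principal tools for certifying that the computed monodromy matrices are trustworthy throughout $[\mu_1,\mu_2]$.
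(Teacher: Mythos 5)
Your proposal follows essentially the same route as the paper: both establish asymptotic stability by computing the Floquet multipliers as eigenvalues of the monodromy matrix along the continued family of orbits, noting the trivial multiplier $1$ in the direction $F(u^0)$, and verifying numerically that the remaining multipliers lie strictly inside the unit disk for all $\mu\in[\mu_1,\mu_2]$ (the paper obtains $M$ by single shooting with a phase condition and finite differences of the flow map, whereas you integrate the variational equation directly, but this is an implementation detail). Your added consistency checks --- Liouville's formula exploiting the strictly negative diagonal of \eqref{eq:jac}, and anchoring the multiplier branches at $\mu_1,\mu_2$ via the supercriticality $\Re b<0$ from Theorem~\ref{theorem2} --- are sound refinements of the same computer-assisted argument rather than a different proof.
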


\begin{proof}
In the previous section we have shown that 
 for a given $\mu \in  [\mu_1, \mu_2]$ the system of differential equations \eqref{eq:model} has a periodic solution $u(t)=u(\mu,t)$ with a  period  $T=T(\mu)\in (0,\infty)$. Naturally, all integer multiples of $T$ are also periods, so $T$ is chosen to be the least period. The periodic orbit is called asymptotically stable, if trajectories starting near the orbit converge to the orbit for $t\rightarrow\infty$. This stability is determined by the eigenvalues of the monodromy matrix of the system \eqref{eq:model}. For $\mu\in [\mu_1, \mu_2]$ the function $\Phi_t(u^0, \mu)$ denotes the solution $u(t)$ at time $t$ starting from $u(0)=u^0$. For a point $(u^0(\mu), \mu)$  on a periodic orbit  of \eqref{eq:model}  and  the least period of this orbit $T=T(\mu)$,  the monodromy matrix is given by
\begin{equation*}
	M=\frac{\partial \Phi_T(u, \mu)}{\partial u}\big |_{u=u^0(\mu)} \; .
\end{equation*}
The eigenvalues of the monodromy matrix are called Floquet multipliers or characteristic multipliers. The Floquet multipliers are independent of the choice of $u^0$ on the periodic orbit, whereas the monodromy matrix and its eigenvectors  depend on this choice. 
Since  for $\mu \in [\mu_1, \mu_2]$ the system \eqref{eq:model} possesses a periodic solution, one of the multipliers is always $1$ and its eigenvector points in the direction tangential to the periodic cycle, i.e. $F(u^0)$ \citep{Marx_2011}. The periodic solution is asymptotically stable, if the absolute values of all other Floquet multipliers are strictly smaller than 1 \citep{Lust_2001}.

In order to compute the Floquet multipliers numerically we first obtain $u^0(\mu)$ and $T(\mu)$ using the single shooting technique \citep{Marx_2011}. For $\mu \in [\mu_1, \mu_2]$ and all points $u^0(\mu) \in \R^4$ on a periodic orbit with period $T(\mu)$ it holds that
\begin{equation*}
	g(u^0(\mu),T(\mu)) := \Phi_T(u^0(\mu), \mu) - u^0 = 0.
\end{equation*}
Since every point on a given orbit fulfills this equation, the system is underdetermined and an additional scalar ``phase condition" of the form
\begin{equation*}
	h(u^0(\mu),T(\mu)) = 0
\end{equation*}
is needed. Most simply, the condition fixes one of the components of  the vector $u^0(\mu)$. Here, we estimated the stable periodic orbit by long-time numerical integration of the dynamical system \eqref{eq:model} by starting slightly off the steady state $u^\ast(\mu)$. Then the state $u(\tau,\mu)$ for some large enough $\tau \in (0, \infty)$ was used to fix the component $u_1^0(\mu)$ by setting $h(u^0(\mu),T(\mu)):=u_1^0(\mu)-u_1(\tau,\mu)$. The combined system
\begin{equation*}
	g(u^0(\mu),T(\mu)) = 0, \quad h(u^0(\mu),T(\mu)) = 0
\end{equation*}
was solved for $(u^0(\mu),T(\mu)) \in \R^{5}$ with a standard Newton algorithm using $u(\tau,\mu)$ and a rough estimate $T(\mu)$ as starting values. Numerical integration of \eqref{eq:model} was done with a Rosenbrock stiffly stable ODE solver \citep{nr3}. For regular periodic solutions, the Newton iteration converges locally with quadratic rate \citep[Theorem 6.25]{Marx_2011}. Having $u^0(\mu)$ and $T(\mu)$, the derivatives in the monodromy matrix were obtained by the finite difference formula and eigenvalues of the monodromy matrix  were calculated using Hessenberg form and QR iteration as in Sect.~\ref{sec:local}.

\begin{figure}
 \includegraphics[width=\textwidth]{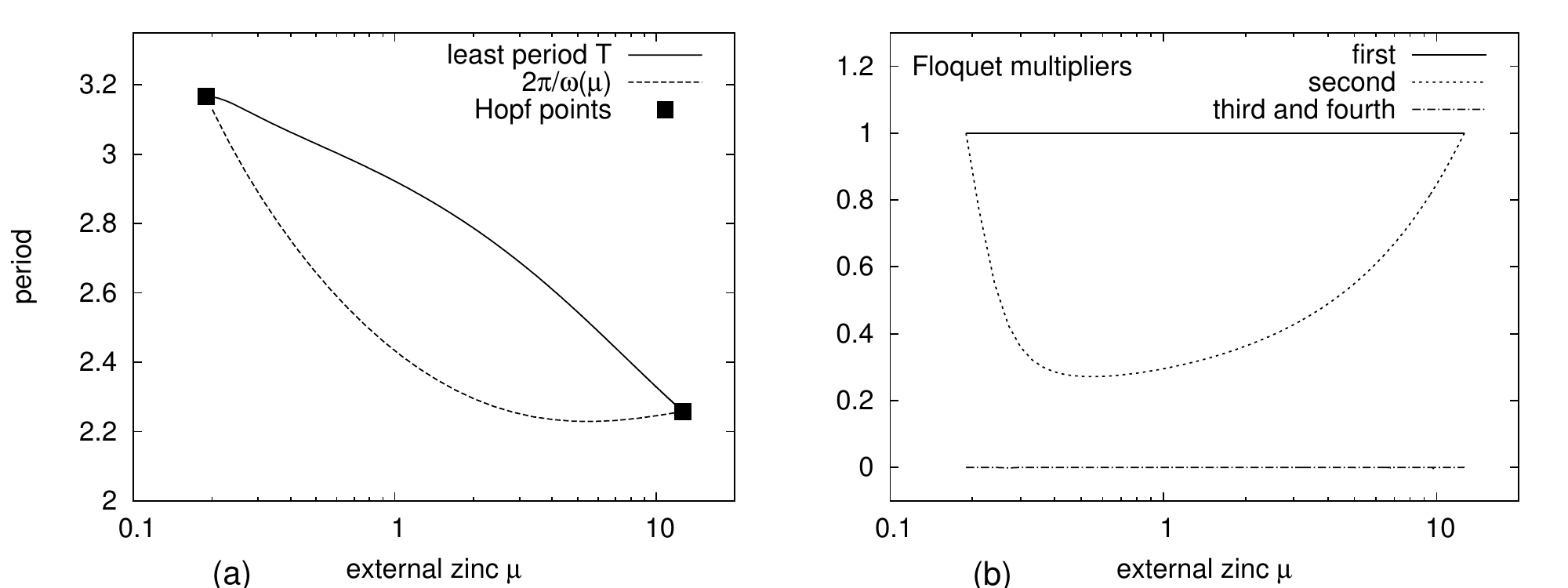}
\caption{Least period $T(\mu)$ (a) and Floquet multipliers (b) of periodic orbits as a function of the external zinc concentration $\mu$ between the two Hopf bifurcation points. (a) The numerically computed least period $T$ (solid line) is shown together with the hypothetical period $2\pi/\omega$ of the linearized system in the steady state (dashed line), where $\omega$ is the imaginary part of the first eigenvalue of the Jacobian matrix $J$. At the critical bifurcation parameter values  ($\mu_1\approx 0.19$ and $\mu_2\approx 12.6$) the lines cross, showing $\lim\limits_{\mu\rightarrow\mu^*}T=2\pi/\omega$. (b) Floquet multipliers were computed numerically. While the first Floquet multiplier equals one, the others are smaller than one. Therefore, the periodic solutions are of type 0 and asymptotically stable for all external zinc concentrations within this range.}
\label{fig:perifloq}
\end{figure}

The resulting minimal periods are shown in Fig.~\ref{fig:perifloq}(a), while the Floquet multipliers for varying external zinc concentration $\mu \in [\mu_1, \mu_2]$ are shown in Fig.~\ref{fig:perifloq}(b). 
 As expected from the theory, the first multiplier in the tangential direction of the periodic orbit equals one, while the others are real, positive, and smaller than one. Therefore, the periodic orbits are asymptotically stable for all external zinc concentrations between the two Hopf bifurcation points  $(u^\ast(\mu_1), \mu_1)$ and 
 $(u^\ast(\mu_2), \mu_2)$. In the bifurcation points, a second Floquet multiplier tends to one and information on stability is provided by local properties at the bifurcation points (see Sect. \ref{sec:local}). For $\mu \in (\mu_1, \mu_2)$ all of the multipliers besides the first are smaller than one in absolute value, no secondary bifurcation takes place, and all the periodic orbits are of type $0$ \citep{Mallet_1982}. It is also noteworthy that two of the Floquet multipliers are very close to zero. This shows that the periodic solutions are strongly attracting in two directions and corresponds  to the two large negative eigenvalues of the Jacobian matrix $J$ of the system \eqref{eq:model}.

\end{proof}

\section{Effects of buffering}\label{buffering}

In a recent paper \citep{Claus_2013} we showed that stability of the steady state is  affected by buffering. It changes the dynamical features of the system and its stability without changing the actual steady state values. Here, we want to discuss this in more detail.

Vacuolar sequestration and complexation by peptides are essential to detoxification of heavy metals in plants, \citep{Hall_2002}. Though in a long term some of these processes are irreversible, we focus here on fast and reversible binding of free zinc (buffering). We do not differ between compartmental sequestration, such as vacuolar storage, and complexation with peptides. Buffering is modeled by a simple reversible reaction with a chelator/buffer $\mathrm{C}$
\[
 \mathrm{Zn^{2+}}\ +\ \mathrm{C} \quad\leftrightarrow\quad  \mathrm{Zn C} \, .
\]
The buffer $\mathrm{C}$ is assumed to be present in high excess with practically constant concentration, and the complex $\mathrm{ZnC}$ is assumed not to be involved in transport and regulation. For more details see \citep{Claus_2013}.

The system \eqref{eq:model} needs to be extended by the aforementioned buffering reaction. In terms of the differential equations, a new equation for buffered zinc considering binding and unbinding to the chelator is needed
\begin{equation}\label{eq:buffer_1}
 \frac{du_5}{dt} = p_2 (p_1 u_2 - u_5) \; ,
\end{equation}
where the concentration of the buffered zinc $\mathrm{Zn C}$ is denoted as $u_5$. The equilibrium constant $p_1>0$ gives the steady state ratio of buffered to unbuffered zinc concentrations, i.e. $p_1 = \frac{u_5^\ast}{u_2^\ast}$. The  constant $p_2>0$ is the characteristic rate of the buffering reactions in relation to the other reactions. Both parameters will be denoted as $p:=(p_1,p_2)^T$ in the following. The equation for the internal zinc concentration $u_2$ needs to be extended with the same terms as Eq. \eqref{eq:buffer_1} with opposite sign to balance buffering and guarantee mass conservation
\begin{equation}\label{eq:buffer_2}
 \frac{du_2}{dt} = u_1 f(\mu) - u_2 - p_2 (p_1 u_2 - u_5) \; ,
\end{equation}
while the other equations in \eqref{eq:model} remain unchanged. The first four components of the stationary solution, i.e.   $u_1^\ast, \ldots, u_4^\ast$, are unaffected by the extension and remain the same as for system \eqref{eq:model}. The steady state buffered zinc concentration is simply proportional to the internal zinc concentration ($u_5^\ast = p_1 u_2^\ast$). The dynamic behavior of the system, however, changes dramatically. We want to analyze this change with respect to the buffering parameters $p$.

Most informative for the dynamic behavior of the system around the steady state is the largest eigenvalue of the Jacobian matrix. The Jacobian of the extended system with buffering evaluated at the steady state  is given by
\begin{equation}
\label{eq:jacbuff}
   J_b(u^\ast(\mu,p),\mu, p) =- \begin{pmatrix}
       \phantom{+}a+1 & \phantom{+}0 & \ -b & \ \phantom{+}0 & \ \phantom{+}0 \\
       -c & \ p_1 p_2+1 & \ \phantom{+}0 & \ \phantom{+}0 & \ -p_2\\
       \phantom{+}0 & \phantom{+}0 & \ \phantom{+}d+1 & \ \phantom{+}e & \ \phantom{+}0 \\
       \phantom{+}0 & -f & \ \phantom{+}g & \ \phantom{+}h+1 & \ \phantom{+}0 \\
       \phantom{+}0 & -p_1 p_2 & \ \phantom{+}0 & \ \phantom{+}0 & \ \phantom{+}p_2
      \end{pmatrix}
\end{equation}
where positive numbers $a,b,c,d,e,f,g,h$ are introduced for simplification:
\begin{equation*}
\begin{array}{ll}
  a=\kappa (u^\ast_3)^2\; ,  \quad & b=2\kappa u^\ast_3(1-u^\ast_1)\; ,\\
  c=f(\mu)\; ,  &d=\gamma_1 u^\ast_4\; ,\\
  e=\gamma_1 u^\ast_3\; ,  &f=\gamma_2 (1-u^\ast_4)\; ,\\
  g=\gamma_3 u^\ast_4\; ,  &h=\gamma_3 u^\ast_3 + \gamma_2 u^\ast_2\; .
\end{array}
\end{equation*}
These values appeared already in the Jacobian matrix $J$ of the unbuffered system [compare \eqref{eq:jac}].
The characteristic polynomial $\chi(\lambda, p)$ of $J_b$ is
\begin{equation}\label{eq:charpol}
 \begin{split}
 \chi(\lambda, p) = & - bcef (\lambda + p_2) - (a+1+\lambda)(-eg + (d+1+\lambda)(h+1+\lambda)) \times \\
  & (\lambda^2 + p_2 + \lambda (1+p_2 + p_1 p_2)).
 \end{split}
\end{equation}
For an eigenvalue $\lambda(\mu, p)$ of $J_b$ it holds that $\chi(\lambda(\mu, p),p) \equiv 0$ and $\frac{d \chi}{d p_i} \equiv 0$, $i=1,2$. Having the solution $\lambda_i^0$ of $\chi(\lambda(\mu, p),p) \equiv 0$  for $p_i=0$ (no buffering), $i=1,2$, a continuation method can be applied
\begin{equation}\label{eq:charpolconti}
\begin{aligned}  
  \frac{\partial \lambda}{\partial p_i} &= -\frac{\partial \chi}{\partial p_i}/ \frac{\partial \chi}{\partial \lambda} \; , \quad p _i \in [0,P_i]\; ,\\
  &\lambda(\mu,0)=\lambda_i^0 \; ,
\end{aligned}
\end{equation}
with $P_i>0$, $i=1,2$ and $\mu \in \mathcal{M}$. This equation was solved numerically using a Dormand-Prince method with adaptive step size for solving ordinary differential equations \citep{nr3}. 

\begin{figure}
 \includegraphics[width=\textwidth]{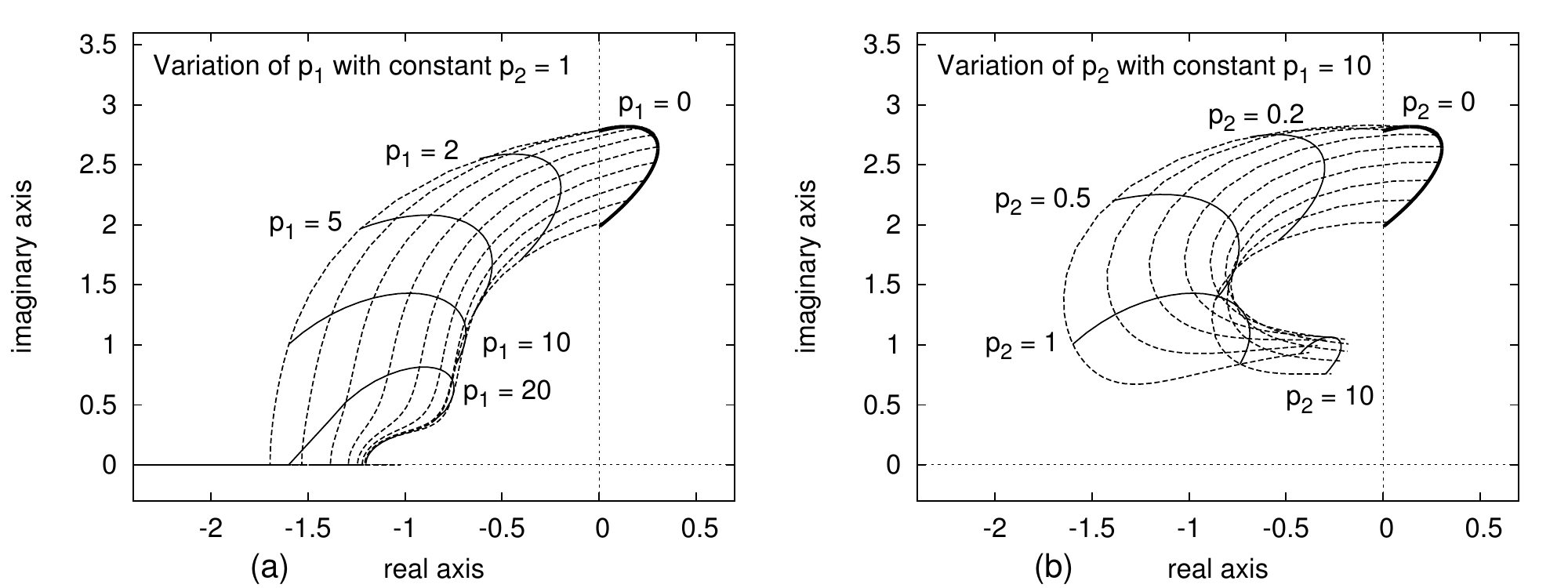}
\caption{Largest eigenvalue $\lambda_1$ of the Jacobian matrix  $J_b$ of the buffered system for variations of the two buffering parameters (a) $p_1$ and (b) $p_2$. Dashed lines show isolines for constant external zinc concentration $\mu$ in the originally unstable region between $0.18$ and $12.6$. Solid lines mark isolines for constant $p_1$ (a) or $p_2$ (b), respectively. Increasing either $p_1$ (a) or $p_2$ (b) stabilizes the system by decreasing real and imaginary part of the eigenvalue.}
\label{fig:buffer}
\end{figure}

The effect of one of the buffering parameters can be calculated by choosing the other to be constant and by solving $\chi(\lambda(\mu, p),p) \equiv 0$ via \eqref{eq:charpolconti} for a given $\mu$. 
Fig.~\ref{fig:buffer} presents the result for both parameters and for various external zinc concentrations $\mu \in \mathcal M$. Both an increase in $p_1$ and an increase in $p_2$ decreases the real part of the two complex eigenvalues below zero and stabilizes the steady state for all external zinc concentrations. In biological terms this means that buffering needs to be sufficiently strong (high $p_1$) and sufficiently fast (high $p_2$) to stabilize the system. Equilibrium constants $p_1$ of roughly over $1.1$ ensure asymptotic stability of the stationary solution (Fig.~\ref{fig:buffer}a). Indeed, experimental measurements suggest that most zinc in the cells is bound to chelators and buffers \citep{Dittmer_2009, Vinkenborg_2009}, so $p_1$ can be estimated to be in the range of at least $1000$. Under such high values of $p_1$ all eigenvalues are real and negative,  and 
oscillations do not occur. The constant $p_2$ does not seem to play such an important role. The eigenvalues turn negative already at $p_2\approx 0.09$. Increasing $p_2$ beyond $1$ results even in less damping (Fig.~\ref{fig:buffer}b), where the eigenvalue eventually converges without crossing the imaginary axis when $p_2 \to \infty$.\\

So far, we analyzed how buffering changes the stability of the system for different constant values of $\mu$. However, we have not dealt with the reaction to a $\mu(t)$ that varies in time. This parameter is the external zinc concentration, which tends to vary in time under natural soil conditions. To get an idea of how dynamic variation of $\mu$ affects the system, assume that the system stays close to the steady state while it varies. Further, assume that variation in $\mu$ is small around a given constant $\mu^0$. Under these conditions, an about the steady state and $\mu^0$ linearized system reflects the behavior of the nonlinear problem sufficiently well and the concept of the transfer function of linear time invariant systems can be applied. Within this concept the model \eqref{eq:model} of zinc homeostasis is considered to be a control system that maps an external (\emph{input}) to an internal concentration (\emph{output}). In more detail, the input is here the relative variation in external zinc $x := (\mu - \mu^0)/\mu^0$, and the output is the relative variation in internal zinc $y := (u_2 - u_2^0)/u_2^0$, where the steady state $u^\ast$ for $\mu=\mu^0$ was denoted with $u^0 := u^\ast(\mu^0)$. Note that the input and output were scaled to account for the difference in magnitude of $u_2$ and $\mu$ ($\sim 0.01$ against $\sim 1$). 

Linearization about $u^0$ and $\mu^0$ delivers
\begin{equation}\label{eq:lti}
\begin{aligned}
 \frac{d \tilde u}{d t} & = J_b^0\, \tilde u + B^0 x\; ,\\
 y &=C^0 \cdot \tilde u \; ,
\end{aligned}
\end{equation}
where $\tilde u = u - u^0$, $J_b^0=J_b(u^0,\mu^0)$, $B^0= \mu^0\,\frac{\partial F}{\partial \mu}(u^0,\mu^0) = \mu^0 u_1^0\partial_\mu f(\mu^0)\, \eta$, $\eta=(0,1,0,0,0)^T$ and $C^0=\eta/u_2^0$. Now we determine how a time dependent input $x(t)$ is mapped to an output $y(t)$. For this purpose \eqref{eq:lti} is Laplace transformed
\[
 \mathcal{L}y  = C^0 \cdot \left(\left(sI - J_b^0\right)^{-1} B^0 \right) \mathcal{L}x\; ,
\]
where $\mathcal{L}y$ and $\mathcal{L}x$ are the Laplace transforms of $y$ and $x$, respectively, and $s = \sigma +i \omega \in \mathbb{C}$ with $\sigma, \omega>0$. The Laplace transforms of the input and output are linearly related by the \emph{transfer function} $G(s):=C^0 \cdot \left(\left(sI - J_b^0\right)^{-1} B^0 \right)$. In principle the system is considered to be a driven oscillator and the transfer function tells how strong an input oscillating with frequency $\omega$ is transferred to the output signal. More elaborate input signals are decomposed by a Fourier or Laplace transform into harmonics of different frequencies allowing to estimate the output solely with the transfer function. We determined $G$ numerically for $\mu^0=1$, $p_2=1$ and various values of $p_1$ and present it in a Bode plot in Fig.~\ref{fig:bode}. The real part of $s$ is usually set to zero and not shown in Bode plots.

The details of how Fig.~\ref{fig:bode} was obtained follow. Firstly, the steady state of \eqref{eq:model} extended by the equations \eqref{eq:buffer_1} and \eqref{eq:buffer_2} for the given $\mu^0$ was determined. Secondly, $J_b^0$, $B^0$ and $C^0$ were calculated for $p_2=1$ and various $p_1$. Finally, $G$ was calculated using GNU Octave's \emph{Computer-Aided Control System Design} toolbox.
\begin{figure}
 \includegraphics{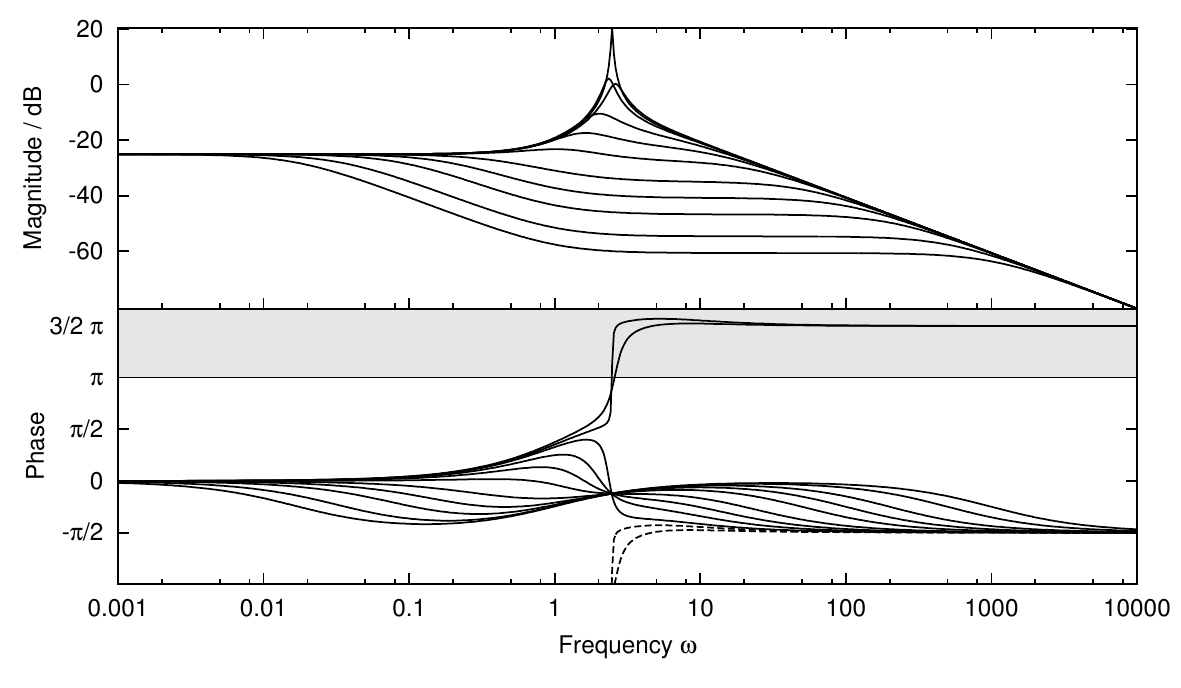}
\caption{Bode plot of the transfer function $G(s)$ of the system with buffering for $\mu^\ast=1$, $p_2=1$ and various $p_1$ between $0$ and $1000$. For small $p_1$ below $\sim 1.1$ (unstable steady state) a clear resonance appears. The accumulated phases of the unstable cases go beyond $\pi$ (grey box) and the corresponding principal values $\operatorname{Arg} G(s)$ are shown as dashed lines.}
\label{fig:bode}
\end{figure}
The gain of the system is in general small and below $-20\, \mathrm{dB}$, appart from a resonance for the cases where the steady state has a low stability or is unstable (Fig.~\ref{fig:bode}). Considering that the system is responsible for homeostasis of zinc, it makes sense that the gain is small to decouple the internal concentration as much as possible from the external concentration. A resonance at the frequency of the periodic orbits is clearly present for $p_1$ roughly smaller than $1.1$, as expected from the results on the global Hopf bifurcation in  Sect.~\ref{sec:global}. The accumulated phases of these unstable cases exceed $\pi$ and tend to $\frac{3}{2}\pi$ for $\omega \to \infty$. The corresponding principal values $\operatorname{Arg} G$ are shown as dashed lines. Increasing $p_1$ damps the system decreasing the tendency to resonate. This is seen by the sudden change in the phase response, where the argument of $G$ stays smaller than $\pi$ for all $\omega$ (Fig.~\ref{fig:bode} bottom). We found previously in Fig.~\ref{fig:buffer}a that very high values of $p_1$ overdamp the system. Here this behavior is observed by a substantial decrease in gain (ca. $-60\, \mathrm{dB}$) and an in phase oscillation for $p_1 =1000$ and frequencies from $2$ to $400$.

\section{Conclusion and biological implications}

Previous simulations showed that the zinc uptake and regulation model considered here tends to have periodic solutions for certain external zinc concentrations and parameter choices \citep{Claus_2012}. The true nature of the oscillation and the effect of buffering were not treated in that publication. Here we focused on proving the  existence and stability of the periodic solutions and analyzed the effect of buffering by extending the model. 

To reduce the complexity of the system we eliminated the purely linear equations from the original six-dimensional model and focused this analysis on four non-linear equations. For further reduction of the system's dimensionality one can look at the eigenvector belonging to the smallest eigenvalue $\lambda_4$, i.e. the direction of fastest decay, see Sect.~\ref{sec:local}. Indeed, as part of the results it was found that this eigenvector points mainly in the direction of the inhibitor $u_4$ (data not shown). Thus, it is possible to further reduce the system to three dimensions by setting $\frac{du_4}{dt}=0$ as a quasi steady state. Since the substitution of $u_4$ leads to more complicated equations and since the reduction of dimensions was not our main priority, we studied here the behavior of the four-dimensional system.

The naturally variable external zinc concentration $\mu$ was considered as the bifurcation parameter. The periodic orbits were found to bifurcate from stationary solutions $u^\ast(\mu)$ at two Hopf bifurcation points  
$(u^\ast(\mu_1), \mu_1)$ and $(u^\ast(\mu_2), \mu_2)$, 
where  $\mu_1 \approx 0.19$ and $\mu_2 \approx 12.64$,  and to form a continuous family  of stable periodic solutions. 
By  computing  the Floquet multipliers  the periodic solutions were found to be stable. Further, the transfer function of the linearized system was determined to analyze how the system reacts to dynamic variation in $\mu$ and how buffering influences this reaction. In an ideal homeostatic system the gain should be very low for all conditions to decouple the input and output. Indeed, we find a generally low gain---up to a resonance corresponding to the frequency of the periodic solutions for low buffering. The gain is in particular low for high frequencies supporting the homeostatic property of the system.

From a biological point of view, it is assumed to be optimal for plants to keep an almost constant zinc concentration under wide ranges of external zinc supply. In this model, a high value of $\gamma_1$, the binding affinity between activator and inhibitor, is required to make the steady state insensitive to variations in $\mu$. Without buffering this high affinity, however, leads to a change in the system's dynamic behavior resulting in instability of the steady state and oscillations. Such oscillations may generate toxic zinc peaks in the cells and therefore pose a dangerous threat for the plant. Without considering the important effect of buffering, we proposed that $\gamma_1$ should be smaller, yielding a model with less robust but stable steady states. Later results in \citet{Claus_2013} and in this paper showed that stability can also be affected by buffering without changing the steady state. We found that already weak buffering can switch the dynamic behavior of the system from oscillations to a stable steady 
state 
and thus protect the plant cells against toxic zinc shocks. Thus, our results suggest that buffering is not only important as a mechanism against fast transient supply changes, but also to stabilize the regulatory system and prevent strong self-oscillatory behavior.

%%%%%%%%%%%%%%%%%%%%%%%%%
\appendix
\section{Appendix: proof of existence and uniqueness of global solution}\label{appendix:exitence}
\begin{proof}[Theorem \ref{existence}]
Due to the  local Lipschitz continuity of $F$,  we obtain the local in time existence and uniqueness of a solution $u(t,\mu)$ of \eqref{eq:model} by the Picard-Lindel\"of theorem.

To prove the uniform boundedness of the solution, we show that  $\mathcal S$ is a positively invariant region for the system \eqref{eq:model}, i.e. that any trajectory $u(t,\mu)$ with $u(0,\mu)\in \mathcal S$ remains in $\mathcal S$ for all times $t\geq0$.  From the equations of the system  \eqref{eq:model} and by using the positivity of the coefficients we obtain the following estimates
\begin{align*}
F_1(u)|_{u_1=0}&=\kappa u_3^2 \geq 0, &\quad F_2(u)|_{u_2=0} &= u_1f(\mu) \geq 0 &\quad \text{ for }  u_1 \geq 0 \; , \\
F_3(u)|_{u_3=0}& = 1 > 0 , &\quad F_4(u)|_{u_4=0} &= \gamma_2 u_2\geq 0 &\quad  \text{ for }  u_2 \geq 0 \; ,
\end{align*} 
which by applying the  invariant region theorem \citep[Theorem 14.7 in][Theorem 16.9]{Smoller,Amann} with $G_i(u) = -u_i$, for $i=1,2,3,4$, imply the lower bound $u\geq 0$. To show the upper bound we use the fact that $f(\mu) < 1$ and obtain
\begin{align*}
F_{1}(u)|_{u_1=1}& = -1 \leq 0, &    F_{2}(u)|_{u_2=1} &= u_1f(\mu)-1 \leq 0  \text{ for } u_1 \leq 1 \;, \\
F_{3}(u)|_{u_3=1} &= -\gamma_1 u_4 \leq  0 \text{ for } u_4\geq 0, \hspace{-0.1 cm } &   F_4(u)|_{u_4=1} &=-\gamma_3 u_3 - 1 \leq 0   \text{ for } u_3\geq 0 \; .
\end{align*}
Then with $G_i(u)= u_i -1$ the invariant region theorem ensures that $u_i \leq 1$, for $i=1,2,3,4$. 
Thus, any trajectory starting in $\mathcal S$ remains bounded within this set. The  global existence and uniqueness of a solution  of  \eqref{eq:model} is then implied by the uniform boundedness of $u(t,\mu)$ together with continuous differentiability of $F$. The smoothness  of  $F:\mathbb R^4 \times \mathcal M \to \mathbb R^4$  with respect to $u$ and $\mu$ ensures also the smoothness of  the solutions $u$ of the system \eqref{eq:model} \citep{Amann}.
\end{proof}

%%%%%%%%%%%%%%%%%%
\section{Appendix: proof of existence of unique and positive steady state}\label{appendix:ss}
The proof of Theorem \ref{ssunique} will be based on the following two Lemmata. In these, $f:=\frac{\mu}{\mu+ K}$ will be used as parameter instead of $\mu$ and $K$, as for any $K\in (0,\infty)$ the function $f(\mu)$ is bijective in $[0,\infty)$ and every $f\in\left(0,1\right)$ is uniquely identified with a $\mu \in (0,\infty)$.
We assume $\kappa,\,\gamma_1,\,\gamma_2,\,\gamma_3 \in\left(0,\infty\right)$
and denote the parameter space by $\mathcal{P}:=\left(0,1\right)\times\left(0,\infty\right)^4$ with elements $p=\left(f,\,\kappa,\,\gamma_1,\,\gamma_2,\,\gamma_3\right)\in\mathcal{P}$.

\begin{mylemma}%[A priori estimates on steady states]
\label{prp_ss_a_priori}\label{prp_ss_exist}
For any $p\in\mathcal{P}$ the system \eqref{eq:model} has at least one steady state $u^\ast\in \interior \mathcal{S}$ and none on $\partial \mathcal{S}$. Further, any steady state $u^\ast \in \interior\mathcal{S}$ corresponds to a root $u^\ast_3$ in $\mathcal{S}_3:=\left(\frac{1}{1+\gamma_1},1\right)$ of the polynomial
\begin{equation}\label{eq:phifunction}
\begin{split}
\phi\left(x\right)
		&:=
		\gamma_3 \kappa x^4
		+ \left(f \gamma_2  \left(1+\gamma_1\right) + 1 - \gamma_3 \right) \kappa x^3 \\
		& \quad\ +\left(\gamma_3 - \kappa - f \gamma_2 \kappa\right) x^2
		+ \left( 1 - \gamma_3 \right) x
		- 1 \; ,
\end{split}
\end{equation}
and any root of $\phi$ in $\mathcal{S}_3$ to precisely one steady state in $\interior \mathcal{S}$.
\end{mylemma}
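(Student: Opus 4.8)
The plan is to reduce the steady-state system $F(u^\ast)=0$ to a single scalar equation in the third coordinate $x:=u_3$. First I would solve the first three equations for the other unknowns in terms of $x$: from $F_1=0$ one gets $u_1=\kappa x^2/(1+\kappa x^2)$, from $F_2=0$ then $u_2=f\,u_1$, and from $F_3=0$ (legitimate since $x>0$ in the interior) $u_4=(1-x)/(\gamma_1 x)$. Substituting these into $F_4=0$ and clearing the strictly positive factors $1+\kappa x^2$ and $\gamma_1 x$ yields a polynomial identity; a direct expansion should reproduce precisely $\phi(x)=0$ with $\phi$ as in \eqref{eq:phifunction}. Because the cleared factors never vanish for $x>0$, this reduction neither creates nor removes roots, so steady states with $u_3=x>0$ correspond exactly to roots $x$ of $\phi$.

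Next I would translate the interior condition $u^\ast\in\interior\mathcal S=(0,1)^4$ into a condition on $x$. The formulas give $u_1\in(0,1)$ and $u_2=f\,u_1\in(0,f)\subset(0,1)$ for every $x>0$, so the only binding restriction comes from $u_4=(1-x)/(\gamma_1 x)$: one checks $u_4>0\iff x<1$ and $u_4<1\iff x>1/(1+\gamma_1)$. Hence $u^\ast\in\interior\mathcal S$ holds exactly when $x\in(1/(1+\gamma_1),1)=\mathcal S_3$. This simultaneously yields the asserted bijection: a steady state in $\interior\mathcal S$ restricts to a root of $\phi$ in $\mathcal S_3$ via $x=u_3^\ast$, and conversely each root $x\in\mathcal S_3$ produces, through the three explicit formulas, exactly one point of $\interior\mathcal S$ solving all four equations.

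For existence I would apply the intermediate value theorem to $\phi$ on $\overline{\mathcal S_3}$, using the factored form $\phi(x)=f\gamma_2\kappa x^2[(\gamma_1+1)x-1]-(1-x)(\gamma_3 x+1)(1+\kappa x^2)$. At $x=1$ the second term drops out and the first collapses to $f\gamma_2\kappa\gamma_1>0$, while at $x=1/(1+\gamma_1)$ the factor $(\gamma_1+1)x-1$ vanishes, leaving $\phi=-(1-x)(\gamma_3 x+1)(1+\kappa x^2)<0$. The sign change forces a root in $\mathcal S_3$, and by the correspondence a steady state in $\interior\mathcal S$.

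Finally, to exclude steady states on $\partial\mathcal S$ I would run through the faces $u_i\in\{0,1\}$ and derive a contradiction in each using positivity of all parameters: $u_3=0$ gives $F_3=1\neq0$; $u_1\in\{0,1\}$ is killed directly by $F_1$, and $u_2\in\{0,1\}$ by $F_2$ (invoking $f<1$ for $u_2=1$); $u_4=0$ forces $u_2=0$ through $F_4$, hence $u_1=0$, already excluded; and $u_3=1$ (forcing $u_4=0$ via $F_3$) or $u_4=1$ (forcing $\gamma_3 u_3=-1$ via $F_4$) each lead to an impossible relation. I expect the main obstacle to be purely organizational: carrying out the substitution into $F_4$ and the polynomial expansion cleanly enough to recognize $\phi$ exactly, and arranging the endpoint signs and boundary cases transparently. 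No step is conceptually deep, but the algebra must be set up so that both the equivalence with $\phi$ and the constraint $x\in\mathcal S_3$ fall out without ambiguity.
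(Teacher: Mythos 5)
Your proposal is correct, and the reduction to the polynomial $\phi$ together with the characterization $u^\ast\in\interior\mathcal{S}\iff u_3^\ast\in\mathcal{S}_3$ matches the paper's argument almost verbatim (same formulas for $u_1^\ast,u_2^\ast,u_4^\ast$ in terms of $u_3^\ast$, same clearing of nonvanishing denominators). Where you genuinely diverge is the existence part: the paper obtains a steady state in $\mathcal{S}$ from Brouwer's fixed point theorem, using that $\mathcal{S}=[0,1]^4$ is compact, convex and positively invariant (Theorem \ref{existence}), and only afterwards shows that any such steady state must lie in $\interior\mathcal{S}$; you instead prove existence directly by the intermediate value theorem applied to $\phi$ on $\overline{\mathcal{S}_3}$, via the factorization
\begin{equation*}
\phi(x)=f\gamma_2\kappa\, x^2\bigl[(\gamma_1+1)x-1\bigr]-(1-x)(\gamma_3 x+1)(1+\kappa x^2)\;,
\end{equation*}
which indeed expands to \eqref{eq:phifunction} and gives $\phi\bigl(\tfrac{1}{1+\gamma_1}\bigr)<0<\phi(1)=f\gamma_2\kappa\gamma_1$. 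Your route is more elementary and self-contained: it needs neither a fixed point theorem nor the invariance result, it works uniformly in $p\in\mathcal{P}$, and it is essentially the same IVT device the paper saves for the proof of Theorem \ref{ssunique}, where it is applied only at the special parameter $f=\tfrac12$, $\kappa=\gamma_1=\gamma_2=\gamma_3=1$ and then propagated to all $p$ via Lemma \ref{prp_ss_detjac_pos}. What the paper's Brouwer argument buys in exchange is robustness: it produces a steady state without any polynomial bookkeeping and would survive modifications of the system for which no one-variable reduction is available. Your boundary exclusion (face-by-face on $u_i\in\{0,1\}$, with the chain $u_3=0\Rightarrow u_1=0\Rightarrow u_2=0\Rightarrow u_4=0$ handled in the right order) is also sound; the paper reaches the same conclusion more compactly through the monotonicity of the expressions \eqref{eq_ss_u124_by_u3} and contradictions in $F_4=0$, but the content is equivalent.
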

\begin{proof}
Existence of $u^\ast$ follows from Brouwer's fixed point theorem \citep[cf. Theorem\ 12.10]{Smoller},
since $\mathcal{S}$ is a positively invariant (see Theorem \ref{existence}), compact and convex subset of $\mathbb{R}^4$.

A steady state $u^\ast$ in $\mathcal{S}$ is a solution of $F(u^\ast,p)=0$. Equation $F_3(u^\ast,p)=0$ implies $u_3^\ast>0$. Using $F_1(u^\ast,p)=0$ to $F_3(u^\ast,p)=0$, expressions for 
$u^\ast_1$, $u^\ast_2$, and $u^\ast_4$ as functions of $u_3^\ast$ are found:
\begin{equation}\label{eq_ss_u124_by_u3}
  \begin{aligned}
  u_1^\ast&= \frac{\kappa u_3^{\ast 2}}{1 + \kappa u_3^{\ast 2}}\; ,& 
  u_2^\ast&= \frac{f \kappa u_3^{\ast 2}}{1 + \kappa u_3^{\ast 2}}\; , &
  u_4^\ast&= \frac{1-u_3^\ast}{\gamma_1 u_3^\ast}\; .
  \end{aligned}
\end{equation}
Note that $u_1^\ast$ and $u_2^\ast$ increase whereas $u_4^\ast$ decreases strictly mono\-tonously with $u_3^\ast$.
In particular we have $u_2^\ast>0$, as $f>0$ and $u_3^\ast > 0$. Assuming $u_4^\ast=0$ or $u_4^\ast = 1$ leads to a contradiction in $F_4(u^\ast,p)=0$, and hence, to $u_4^\ast\in\left(0,1\right)$. The last equation in \eqref{eq_ss_u124_by_u3} delivers a monotonic expression for $u_3^\ast$ in dependence of $u_4^\ast$. From monotonicity it follows that $u_3^\ast$ as a function of $u_4^\ast$ assumes its extrema at the bounds of $(0,1)$, and thus, $u_3^\ast\in\mathcal{S}_3$. Correspondingly, by monotonicity of $u_1^\ast$ and $u_2^\ast$ with respect to $u_3^\ast$, we find $u_1^\ast, u_2^\ast \in (0,1)$. Thus, $u^\ast \in \interior\mathcal{S}$ and $u^\ast \notin \partial S$ for any $p \in \mathcal{P}$.

Substituting \eqref{eq_ss_u124_by_u3} into $F_4(u^\ast,p)=0$ yields the equation $\phi\left(u_3^\ast\right)=0$. Thus, any steady state $u^\ast \in \interior \mathcal{S}$ corresponds to a root of $\phi$ in $\mathcal{S}_3$. Conversely, if $u_3^\ast$ is a root of $\phi$ in $\mathcal{S}_3$, then \eqref{eq_ss_u124_by_u3} delivers unique $u_1^\ast$, $u_2^\ast$ and $u_4^\ast$, which fulfill $F(u^\ast,p)=0$ and $u^\ast \in \interior \mathcal{S}$.
\end{proof}

\begin{mylemma}%[Jacobian strictly positive]
\label{prp_ss_detjac_pos}
The number of steady states of system \eqref{eq:model} in $\mathcal{S}$ is constant for all $p \in \mathcal{P}$, and the determinant of the Jacobian \eqref{eq:jac} is strictly positive when evaluated at a steady state $u^\ast \in\mathcal{S}$.
\end{mylemma}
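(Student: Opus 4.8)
The plan is to prove the two assertions in the order in which the second depends on the first. First I would establish that $\det J(u^\ast,\mu)>0$ at every steady state by a direct computation; this non-degeneracy is then the engine for the second claim, which I would obtain by a continuation argument showing that no steady state can be created or destroyed as $p$ ranges over the connected set $\mathcal{P}$.

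For the determinant I would exploit the sparsity of \eqref{eq:jac}. Expanding along the second column, whose only nonzero entries sit in rows~$2$ and~$4$, reduces the $4\times4$ determinant to two $3\times3$ minors, and a computation gives
\begin{equation*}
\begin{split}
\det J &= (\kappa u_3^{\ast 2}+1)\bigl[(\gamma_1 u_4^\ast + 1)(\gamma_3 u_3^\ast + \gamma_2 u_2^\ast + 1) - \gamma_1 \gamma_3 u_3^\ast u_4^\ast\bigr] \\
&\quad + 2\kappa\gamma_1\gamma_2\, u_3^{\ast 2}(1-u_1^\ast)(1-u_4^\ast)\, f(\mu)\; .
\end{split}
\end{equation*}
The key algebraic observation is that the bracketed factor simplifies: the cross term $\gamma_1\gamma_3 u_3^\ast u_4^\ast$ cancels, leaving $\gamma_1\gamma_2 u_2^\ast u_4^\ast + \gamma_1 u_4^\ast + \gamma_3 u_3^\ast + \gamma_2 u_2^\ast + 1$, which is strictly positive. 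Since Lemma~\ref{prp_ss_exist} places every steady state in $\interior\mathcal{S}$, we have $u_1^\ast,u_4^\ast\in(0,1)$ and $u_3^\ast>0$, so the second summand is positive as well (here $f(\mu)>0$ because $f\in(0,1)$). Hence $\det J>0$, and because Lemma~\ref{prp_ss_exist} excludes boundary steady states this covers every $u^\ast\in\mathcal{S}$.

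For the number of steady states, write $N(p)$ for that number. By Lemma~\ref{prp_ss_exist} the steady states in $\mathcal{S}$ are in bijection with the roots of the quartic $\phi$ in $\mathcal{S}_3$, so $N(p)$ is finite (at most four) and no steady state lies on $\partial\mathcal{S}$. I would show that $N$ is locally constant, which suffices since $\mathcal{P}=(0,1)\times(0,\infty)^4$ is convex and hence connected. Fixing $p_0$ with steady states $u^{\ast,1},\dots,u^{\ast,N}$, the first part gives $\det J\neq 0$ at each, so the implicit function theorem produces, for $p$ near $p_0$, exactly one steady state in a small neighbourhood of each $u^{\ast,i}$, varying smoothly; this already yields $N(p_0)$ steady states for nearby $p$. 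The hard part will be to rule out the emergence of \emph{extra} steady states as $p\to p_0$: if some sequence $p_n\to p_0$ carried a steady state $v_n$ outside all these neighbourhoods, compactness of $\mathcal{S}$ would furnish a convergent subsequence $v_n\to v_\ast$, continuity of $F$ would force $F(v_\ast,p_0)=0$, and Lemma~\ref{prp_ss_exist} would place $v_\ast\in\interior\mathcal{S}$, so $v_\ast$ would coincide with one of the $u^{\ast,i}$ --- contradicting that $v_n$ stays outside its neighbourhood. Thus $N$ is locally constant, and by connectedness of $\mathcal{P}$ it is constant.
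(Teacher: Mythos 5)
Your proof is correct, and although it shares the paper's two-stage architecture (determinant positivity first, then a continuation argument over the connected set $\mathcal{P}$), both stages are executed by genuinely different means. For the determinant, the paper substitutes the steady-state relations \eqref{eq_ss_u124_by_u3} to eliminate $u_1^\ast$, $u_2^\ast$, $u_4^\ast$ and writes $D$ as a polynomial in $u_3^\ast$ alone; that polynomial contains one negative term, $-f\gamma_2\kappa u_3^{\ast 2}$, which must be absorbed using the lower bound $u_3^\ast>\frac{1}{1+\gamma_1}$ defining $\mathcal{S}_3$. Your cofactor expansion keeps the Jacobian entries intact and rests instead on the cancellation of the cross term $\gamma_1\gamma_3 u_3^\ast u_4^\ast$ (in the paper's shorthand, $dh-eg=\gamma_1\gamma_2 u_2^\ast u_4^\ast$, which is exactly the identity the paper exploits later in Appendix \ref{appendix:spectrum}, but not in this proof), so that positivity follows from the interior bounds $0<u_i^\ast<1$ and $f\in(0,1)$ alone, with no sharper localization of $u_3^\ast$ needed; your displayed formula for $\det J$ agrees with the paper's polynomial after substitution, so the two computations are consistent. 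For the constancy of the count, the paper's argument is a sketch: it asserts that the number could only change by a steady state crossing $\partial\mathcal{S}$ (excluded by Lemma \ref{prp_ss_exist}) or by a steady-state bifurcation (excluded by $D>0$), without spelling out why additional steady states cannot appear near a given parameter value. Your implicit-function-theorem step combined with the compactness/subsequence argument supplies precisely that missing detail and yields local constancy of $N(p)$ rigorously, after which connectedness finishes the job as in the paper. In short, the paper's route buys an explicit expression for $D$, while yours buys a positivity argument that needs weaker information about the steady state and a more watertight counting argument.
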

\begin{proof}
Consider $D:=\det J\left(u^\ast,p\right)$ where $u^\ast$ is a steady state corresponding to the parameter $p$.
The relations \eqref{eq_ss_u124_by_u3} may be used to express $u_1^\ast$, $u_2^\ast$, and $u_4^\ast$ by $u_3^\ast$, and the determinant is then
\[
\begin{split}
D
&=\frac{1}{u^{\ast}_3(1+\kappa u^{\ast\,2}_3)}\Big(\gamma_3\, \kappa^2 u^{\ast\,6}_3 + \left(\kappa f\gamma_2 + 2 \gamma_3 + \kappa\right)\kappa u^{\ast\,4}_3 + (2\kappa + \gamma_3)u^{\ast\,2}_3+ 1 \\
& \qquad\qquad\qquad\quad + \big(2 f \gamma_2 (1+\gamma_1)u^{\ast}_3 - f\gamma_2\big)\kappa u^{\ast\,2}_3 \Big) \; ,
\end{split}
\]
where $u^\ast_3 \in \mathcal{S}_3$ as defined as in Lemma \ref{prp_ss_exist}. Up to $-f\gamma_2$, all terms are positive. Using $u^\ast_3 > \frac{1}{1+\gamma_1}$ we find
\[
  2 f \gamma_2 \left(1+\gamma_1\right) u_3^\ast - f\gamma_2 > f \gamma_2 > 0\;.
\]
Thus, $D>0$ for any $p \in \mathcal{P}$ and any corresponding steady state $u^\ast \in \mathcal{S}$.

Since $\mathcal{P}$ is connected and $\mathbb{N}$ is discrete, the number of steady states in $\mathcal{S}$ could only be non-constant if it was discontinuous in $\mathcal{P}$. By Lemma~\ref{prp_ss_a_priori} a steady state $u^\ast$ cannot enter or leave the compact set $\mathcal{S}$ upon variation of $p$, because any steady $u^\ast$ depends continuously on $p$ and does not cross over $\partial\mathcal{S}$. Hence, the only way the number of steady states could change is by bifurcation of steady states (pitchfork or saddle node), which is not possible because $D>0$ in $\mathcal{P}$. In total, we find that the number of steady states in $\mathcal{S}$ has to be constant for any $p \in \mathcal{P}$.
\end{proof}

\begin{proof}[Theorem \ref{ssunique}]
By choosing one specific parameter in $\mathcal{P}$, namely  $f=\frac{1}{2}$ and 
$\kappa=\gamma_1=\gamma_2=\gamma_3=1$, we obtain
\[
\phi(x) = x^4 + x^3 -\frac{1}{2}x^2 - 1 \; ,
\]
and $\mathcal{S}_3=\left(\frac{1}{2},1\right)$. We find the existence of at least one root because $\phi(\frac{1}
{2})=-\frac{15}{16}<0$ and $\phi(1)=\frac{1}{2}>0$. The uniqueness is proven 
by strict monotonicity of $\phi$ in $\mathcal{S}_3$:
\[
 \frac{d \phi}{d x}=4\,x^3 + 3\,x^2 - x> \frac{1}{4} > 0 \; .
\]
By Lemma \ref{prp_ss_exist} this corresponds to exactly one steady state in 
$\mathcal{S}$ and from Lemma \ref{prp_ss_detjac_pos} the same holds for any 
$p\in\mathcal{P}$.

The smoothness of $u^\ast$ with respect to $p$ follows from the
smoothness of $F$ and the implicit function theorem. As $f$ is given by
a smooth function in $\mu$ (for $\kappa$, $\gamma_1$, $\gamma_2$, $\gamma_3$, $K$ fixed), the
steady state $u^\ast(\mu)$ can also be viewed as a smooth function in $\mu$.
\end{proof}

%%%%%%%%%%%%%%%%%%%%%
\section{Appendix: properties of the Jacobian's spectrum}\label{appendix:spectrum}
Up to permutation only the following combinations of eigenvalues $\lambda_1, \ldots, \lambda_4$  are possible for the Jacobian $J^\ast(\mu)=J(u^\ast,\mu)$ evaluated the positive steady state $u^\ast$ for $\mu \in \mathcal{M}$ and $p \in \mathcal{P}$:
\begin{enumerate}
 \item $\lambda_1, \ldots, \lambda_4 < -1$
 \item $\lambda_{1,2} < -1$ and $\lambda_3 = \overline\lambda_4$ with $\operatorname{Im} \lambda_{3,4} \neq 0$
 \item $\lambda_{1} = \overline\lambda_2$ with $\operatorname{Im} \lambda_{1,2} \neq 0$, $\operatorname{Re} \lambda_{1,2} <0$ and $\lambda_3 = \overline\lambda_4$ with $\operatorname{Im} \lambda_{3,4} \neq 0$
\end{enumerate}
In particular $J^\ast$ cannot have two pairs of conjugate eigenvalues both on the imaginary axis. In Monte Carlo simulations cases 1, 2 with $\operatorname{Re}\lambda_{3,4}$ negative, zero and positive, and 3 with $\operatorname{Re}\lambda_{3,4}<0$ have been observed. This suggests that in case 3 of two complex pairs, probably both have negative real parts.
\begin{proof}
Over $\mathbb{C}$ the Jacobian has four eigenvalues $\lambda_1,\ldots, \lambda_4$. Because of $\operatorname{det} J^\ast = \lambda_1 \lambda_2 \lambda_3 \lambda_4 > 0$ zero is not an eigenvalue and the number of negative eigenvalues is even. By $\operatorname{tr} J^\ast = \lambda_1 + \lambda_2 +\lambda_3 +\lambda_4< 0$ there must be at least one eigenvalue with strictly negative real part. The characteristic polynomial of $J^\ast$ with $\tilde\lambda := \lambda +1$ and $a,\ldots, h$ as in \eqref{eq:charpol} is
\[
 \tilde\chi(\tilde\lambda)=\left(\tilde\lambda +a\right) \tilde\lambda \left( \tilde\lambda^2 + (d+h) \tilde\lambda +(d h - e g)\right) + b c f e \; .
\]
With $d h - e g = \gamma_1 \gamma_2 u_2^\ast u_4^\ast \geq 0$ all coefficients of $\tilde\chi$ are non-negative and thus $\tilde\chi (\tilde\lambda) \geq b c fe > 0$ for $\tilde\lambda \geq 0$. Hence $J^\ast$ does not have any real eigenvalue $\lambda \geq -1$.
\end{proof}

\bibliographystyle{plainnat}
\bibliography{Claus_et_al_2014_JMB}   % name your BibTeX data base

\begin{thebibliography}{27}
\providecommand{\natexlab}[1]{#1}
\providecommand{\url}[1]{\texttt{#1}}
\expandafter\ifx\csname urlstyle\endcsname\relax
  \providecommand{\doi}[1]{doi: #1}\else
  \providecommand{\doi}{doi: \begingroup \urlstyle{rm}\Url}\fi

\bibitem[Alexander and Yorke(1978)]{Alexander_1978}
J.C. Alexander and J.A. Yorke.
\newblock Global bifurcations of periodic orbits.
\newblock \emph{Amer J Math}, 100\penalty0 (2):\penalty0 263--292, 1978.

\bibitem[Alligood and Yorke(1984)]{Alligood1984}
K.T. Alligood and J.A. Yorke.
\newblock Families of periodic orbits: Virtual periods and global
  continuability.
\newblock \emph{J Differ Equations}, 55:\penalty0 59--71, 1984.

\bibitem[Alligood et~al.(1983)Alligood, Mallet-Paret, and Yorke]{Alligood1983}
K.T. Alligood, J.~Mallet-Paret, and J.A. Yorke.
\newblock An index for the global continuation of relatively isolated sets of
  periodic orbits.
\newblock \emph{Lect Notes Math}, 1007:\penalty0 1--21, 1983.

\bibitem[Amann(1990)]{Amann}
H.~Amann.
\newblock \emph{Ordinary differential equations. An introduction to nonlinear
  analysis}.
\newblock W. de Gruyter Berlin, 1990.

\bibitem[Assun\c{c}{\~a}o et~al.(2010{\natexlab{a}})Assun\c{c}{\~a}o, Herrero,
  Lin, Huettel, Talukdar, Smaczniak, Immink, van Eldik, Fiers, Schat, and
  Aarts]{Assuncao_2010}
A.G.L. Assun\c{c}{\~a}o, E.~Herrero, Y.-F. Lin, B.~Huettel, S.~Talukdar,
  C.~Smaczniak, R.G.H. Immink, M.~van Eldik, M.~Fiers, H.~Schat, and M.G.M.
  Aarts.
\newblock \emph{Arabidopsis thaliana} transcription factors {bZIP19} and
  {bZIP23} regulate the adaptation to zinc deficiency.
\newblock \emph{PNAS}, 107\penalty0 (22):\penalty0 10296--10301,
  2010{\natexlab{a}}.

\bibitem[Assun\c{c}{\~a}o et~al.(2010{\natexlab{b}})Assun\c{c}{\~a}o, Schat,
  and Aarts]{Assuncao_2010_mini}
A.G.L. Assun\c{c}{\~a}o, H.~Schat, and M.G.M. Aarts.
\newblock Regulation of the adaptation to zinc deficiency in plants.
\newblock \emph{Plant Signal Behav}, 5\penalty0 (12):\penalty0 1553--1555,
  2010{\natexlab{b}}.

\bibitem[Chow and Mallet-Paret(1978)]{Chow1978}
S.-H. Chow and J.~Mallet-Paret.
\newblock The {F}uller index and global {H}opf bifurcation.
\newblock \emph{J Differ Equations}, 29:\penalty0 66--85, 1978.

\bibitem[Claus and Chavarr\'ia-Krauser(2012)]{Claus_2012}
J.~Claus and A.~Chavarr\'ia-Krauser.
\newblock Modeling regulation of zinc uptake via {ZIP} transporters in yeast
  and plant roots.
\newblock \emph{PLoS One}, 7:\penalty0 e37193, 2012.

\bibitem[Claus and Chavarr\'ia-Krauser(2013)]{Claus_2013}
J.~Claus and A.~Chavarr\'ia-Krauser.
\newblock Implications of a zinc uptake and transport model.
\newblock \emph{Plant Signal Behav}, 8:\penalty0 e24167, 2013.

\bibitem[Dittmer et~al.(2009)Dittmer, Miranda, Gorski, and
  Palmer]{Dittmer_2009}
P.~J. Dittmer, J.~G. Miranda, J.~A. Gorski, and A.~E. Palmer.
\newblock Genetically encoded sensors to elucidate spatial distribution of
  cellular zinc.
\newblock \emph{J Biol Chem}, 284\penalty0 (24):\penalty0 16289--97, 2009.

\bibitem[Fiedler(1986)]{Fiedler_1986}
B.~Fiedler.
\newblock Global {H}opf bifurcation of two-parameter flows.
\newblock \emph{Arch Ration Mech An}, 94\penalty0 (1):\penalty0 59--81, 1986.

\bibitem[Grotz et~al.(1998)Grotz, Fox, Connolly, Park, Guerinot, and
  Eide]{Grotz_1998}
N.~Grotz, T.~Fox, E.~Connolly, W.~Park, M.L. Guerinot, and D.~Eide.
\newblock Identification of a family of zinc transporter genes from
  \emph{Arabidopsis} that respond to zinc deficiency.
\newblock \emph{PNAS}, 95:\penalty0 7220--7224, 1998.

\bibitem[Guerinot(2000)]{Guerinot_2000}
M.L. Guerinot.
\newblock The {ZIP} family of metal transporters.
\newblock \emph{Biochim Biophys Acta}, 1465:\penalty0 190--198, 2000.

\bibitem[Hall(2002)]{Hall_2002}
J.L. Hall.
\newblock Cellular mechanisms for heavy metal detoxification and tolerance.
\newblock \emph{J Exp Bot}, 53:\penalty0 1--11, 2002.

\bibitem[Haragus and Iooss(2011)]{Haragus_2011}
M.~Haragus and G.~Iooss.
\newblock \emph{Local Bifurcations, Center Manifolds, and Normal Forms in
  Infinite-Dimensional Dynamical Systems}.
\newblock Springer London, 2011.

\bibitem[Hassard et~al.(1981)Hassard, Kazarinoff, and Wan]{Hassard_1981}
B.D. Hassard, N.D. Kazarinoff, and Y.-H. Wan.
\newblock \emph{Theory and applications of Hopf bifurcation}.
\newblock Cambrigde Univ Pr, 1981.

\bibitem[Hopf(1942)]{Hopf_1942}
E.~Hopf.
\newblock Abzweigung einer periodischen {L}\"osung von einer station\"aren
  {L}\"osung eines {D}ifferentialsystems.
\newblock \emph{Ber Math Phys Kl Sachs Akad Wiss Leipzig}, 94:\penalty0 1--22,
  1942.

\bibitem[Ipsen et~al.(1998)Ipsen, Hynne, and S{\o}rensen]{Ipsen_1998}
M.~Ipsen, F.~Hynne, and P.~G. S{\o}rensen.
\newblock Systematic derivation of amplitude equations and normal forms for
  dynamical systems.
\newblock \emph{Chaos}, 8:\penalty0 834--852, 1998.

\bibitem[Ize(1976)]{Ize_1976}
J.~Ize.
\newblock Bifurcation theory for {F}redholm operators.
\newblock \emph{Mem Am Math Soc}, 7\penalty0 (174), 1976.

\bibitem[Lust(2001)]{Lust_2001}
K.~Lust.
\newblock Improved numerical {F}loquet multipliers.
\newblock \emph{Int J Bifurcat Chaos}, 11\penalty0 (9):\penalty0 2389--2410,
  2001.

\bibitem[Mallet-Paret and Yorke(1980)]{Mallet_1980}
J.~Mallet-Paret and J.A. Yorke.
\newblock Two types of {H}opf bifurcation points: sources and sinks of families
  of periodic orbits.
\newblock \emph{Ann N Y Acad Sci}, 357:\penalty0 300--304, 1980.

\bibitem[Mallet-Paret and Yorke(1982)]{Mallet_1982}
J.~Mallet-Paret and J.A. Yorke.
\newblock Snakes: Oriented families of periodic orbits, their sources, sinks,
  and continuation.
\newblock \emph{J Differ Equations}, 43:\penalty0 419--450, 1982.

\bibitem[Marx and Vogt(2011)]{Marx_2011}
B.~Marx and W.~Vogt.
\newblock \emph{Dynamische Systeme: Theorie und Numerik}.
\newblock Spektrum Akademischer Verlag Heidelberg, 2011.

\bibitem[Press et~al.(2007)Press, Teukolsky, Vetterling, and Flannery]{nr3}
W.H. Press, S.A. Teukolsky, W.T. Vetterling, and B.P. Flannery.
\newblock \emph{Numerical Recipes: The art of scientific computing}.
\newblock Cambridge Univ Pr, third edition, 2007.

\bibitem[Smoller(1994)]{Smoller}
J.~Smoller.
\newblock \emph{Shock Waves and Reaction-Diffusion Equations}.
\newblock Springer-Verlag New York, 1994.

\bibitem[Talke et~al.(2006)Talke, Hanikenne, and Kr\"amer]{Talke_2006}
I.N. Talke, M.~Hanikenne, and U.~Kr\"amer.
\newblock Zinc-dependent global transcriptional control, transcriptional
  deregulation, and higher gene copy number for genes in metal homeostasis of
  the hyperaccumulator \emph{Arabiodpsis halleri}.
\newblock \emph{Plant Physiol}, 142:\penalty0 148--167, 2006.

\bibitem[Vinkenborg et~al.(2009)Vinkenborg, Nicolson, Bellomo, Koay, Rutter,
  and Merkx]{Vinkenborg_2009}
J.~L. Vinkenborg, T.~J. Nicolson, E.~A. Bellomo, M.~S. Koay, G.~A. Rutter, and
  M.~Merkx.
\newblock Genetically encoded {FRET} sensors to monitor intracellular {Zn2+}
  homeostasis.
\newblock \emph{Nat Methods}, 6\penalty0 (10):\penalty0 737--40, 2009.

\end{thebibliography}

\end{document}